\documentclass[reqno, 11pt]{amsart}
\usepackage{amssymb,amsmath,amsfonts}
\usepackage[margin=1in]{geometry}
\usepackage{dsfont}
\usepackage{color}
\usepackage{graphicx}
\usepackage{latexsym}
\usepackage{amsmath}
\usepackage{amssymb}
\usepackage{graphics}
\usepackage[dvips]{epsfig}
\usepackage{mathrsfs}
\usepackage{mathtools}
\usepackage{leqno}
\usepackage{stmaryrd,cite}

\usepackage{cases}
\usepackage{enumerate}
\usepackage[usenames,dvipsnames,svgnames]{xcolor}
\definecolor{refkey}{rgb}{1,0,0.5}
\definecolor{labelkey}{rgb}{0,0.4,1}
\usepackage{todonotes}
\usepackage{marginnote}

\presetkeys{todonotes}{fancyline, color=green!40}{}

\makeatletter
\renewcommand{\@todonotes@drawMarginNoteWithLine}{%
	\begin{tikzpicture}[remember picture, overlay, baseline=-0.75ex]%
	\node [coordinate] (inText) {};%
	\end{tikzpicture}%
	\marginnote[{% Draw note in left margin
		\@todonotes@drawMarginNote%
		\@todonotes@drawLineToLeftMargin%
	}]{% Draw note in right margin
		\@todonotes@drawMarginNote%
		\@todonotes@drawLineToRightMargin%
	}%
}
\makeatother
\usepackage{listings}
\usepackage{ifpdf}
\ifpdf
\usepackage[CJKbookmarks=true,
         hyperindex=true,
         pdfstartview=FitH,
         bookmarksnumbered=true,
         bookmarksopen=true,
         colorlinks=true,
         citecolor=blue,
         linkcolor=blue,
         urlcolor=blue,
         pdfborder=001,
         pdfauthor={},
         pdftitle={},
         pdfkeywords={},
         ]{hyperref}
\else
\usepackage[%CJKbookmarks=true,
         hypertex,
         hyperindex,
         linkcolor=blue,
         unicode,
         citecolor=blue%
         ]{hyperref}
\fi
\usepackage{cleveref}

\allowdisplaybreaks
%--------------------------------------
%--------------------------------------

%\linespread{1.2}

\numberwithin{equation}{section}
%\numberwithin{equation}{subsection}
%-------------------------------------
%-------------------------------------
%new theorem
\newtheorem{theorem}{Theorem}[section]

\newtheorem{lemma}[theorem]{Lemma}
\newtheorem{proposition}[theorem]{Proposition}
\newtheorem{remark}[theorem]{Remark}

%-------------------------------------
%-------------------------------------
% new sign

%-------------------------------------
%-------------------------------------

%-----------------------------
%-----------------------------
% new command
\newcommand{\be}{\begin{equation}}
\newcommand{\ee}{\end{equation}}

\newcommand{\bee}{\begin{equation*}}
\newcommand{\eee}{\end{equation*}}

\newcommand{\bse}{\begin{subequations}}
\newcommand{\ese}{\end{subequations}}

\newcommand{\bs}{\begin{split}}
\newcommand{\es}{\end{split}}

%-----------------------------

\begin{document}

\author{Hairong Liu$^{1}$}\thanks{$^{1}$School of Mathematics and Statistics, Nanjing University of Science and Technology,  Nanjing 210094 P.R.China.
E-mail : hrliu@njust.edu.cn}

\author{Xiaoping Yang$^{2}$}\thanks{$^{2}$School of Mathematics,  Nanjing University, Nanjing 210093, P.R. China.
E-mail: xpyang@nju.edu.cn}

\title[] {Strong unique continuation property for fourth order Baouendi-Grushin type subelliptic operators
with strongly singular potential}

\begin{abstract}
In this paper, we prove the strong unique continuation property for the following fourth order degenerate elliptic equation
\begin{align*}
\Delta^2_{X}u=Vu,
\end{align*}
where $\Delta_{X}=\Delta_{x}+|x|^{2\alpha}\Delta_{y}$ ($0<\alpha\leq1$), with $x\in\mathbb{R}^{m}, y\in\mathbb{R}^{n}$,
denotes the Baouendi-Grushin type subelliptic operators, and
the potential $V$ satisfies the strongly singular growth assumption
$|V|\leq \frac{c_0}{\rho^4}$, where
\begin{align*}
\rho=\left(|x|^{2(\alpha+1)}+(\alpha+1)^2|y|^2\right)^{\frac{1}{2(\alpha+1)}}
\end{align*}
 is the gauge norm.
The main argument is to introduce an Almgren's type frequency function for the solutions, and show its monotonicity to obtain a doubling estimate based on setting up some refined Hardy-Rellich type inequalities
on the gauge balls with  boundary terms.

\noindent {\bf Keywords}:Strong unique continuation property; Fourth order Baouendi-Grushin type subelliptic operator; Local Hardy-Rellich type inequality.\\

{\bf AMS Subject Classifications.}   35H20, 35J70.
\end{abstract}

\maketitle

\section{Introduction and main results}

This paper is devoted to studying the strong unique continuation property for  fourth order
Baouendi-Grushin type subelliptic operators with strongly singular potential.
A differential operator $L$ is
said to have the strong unique continuation property in $\Omega$ if the only solution of $Lu=0$
which vanishes of infinite order at a point $x_0\in\Omega$ is $u\equiv0$.  In the past decades the strong unique continuation property for solutions to various kinds of
 partial differential equations has attracted a large number of researchers and induced
 many interesting and intensive results. The results for the second order operator
\begin{align}\label{sch}
-\Delta u=Vu
\end{align}
go back to the work of Carleman \cite{c1939}, who solved the uniqueness problem in $\mathbb{R}^{2}$ with  bounded potentials.  Cordes \cite{c1956} and Aronszajn \cite{a1957} extended the strong unique continuation property to second-order equations in $\mathbb{R}^{n}$. Subsequent developments in this direction is Jerison and Kenig's result \cite{jk1985}  on the strong unique continuation property for (\ref{sch}) with $V\in L_{loc}^{n/2}(\mathbb{R}^{n})$. There is a large amount of work on strong unique continuation and quantitative unique continuation  for second order elliptic operators, achieved by different Carleman type estimates (cf. \cite{DZ2019, DLW2021, h2001} and references therein). On the other hand, Garofalo and Lin \cite{gl1,gl2} presented a geometric-variational approach to the strong unique continuation by using the frequency function. Their method is based on establishing the doubling estimate
which in turn depends on the monotonicity property of the frequency function.  It is worth pointing out that
the frequency function was first introduced by Almgren \cite{alm} for harmonic functions.

On the contrary, it turns out that unique continuation property  is generically not true for subelliptic
operators. Bahouri \cite{B1986} showed that unique continuation property is not true for even smooth
and compactly supported perturbations of the sub-Laplacian on the Heisenberg
group. Garofalo and Lanconelli \cite{GL} showed some positive results of the strong unique continuation property to the sub-Laplace
$\Delta_{H}u=Vu$ on the Heisenberg group provided  $u$ has some symmetries. Closely related to  sub-Laplacian on
the Heisenberg group is the following  Baouendi-Grushin operator
\begin{align}\label{Grushin0}
\Delta_{X}=\Delta_{x}+|x|^{2\alpha}\Delta_{y}, \quad \alpha>0,
\end{align}
which is the operator of a sum of squares of
the following vector fields $X=\{X_1,\cdots,X_{m+n}\}$
\begin{equation}\label{vector1}
X_i=\partial_ {x_i},
\hspace{1mm}i=1,\cdots,m,\hspace{2mm}
X_{m+j}=|x|^{\alpha} \partial_{y_j},\hspace{1mm}j=1,\cdots,n, \quad m,n\geq1.
\end{equation}
Here $\alpha>0$ is a fixed parameter, $x=(x_1,\cdots,x_{m})\in
\mathbb{R}^{m}$, $y=(y_1,\cdots,y_{n})\in \mathbb{R}^{n}$ and $\Delta_{x}, \Delta_{y}$ denote the standard Laplacian.
When $\alpha=0$, $\Delta_{X}$ is just the standard Laplacian. For $\alpha>0$, $\Delta_{X}$ is
elliptic for $x\neq 0$ and degenerate on the
submanifold $\{0\}\times \mathbb{R}^{n}$, and  it is not translation invariant in $\mathbb{R}^{N}$.
We mention that when $\alpha=1$, operator $\Delta_{X}$ is
connected to the sub-Laplacians on the Heisenberg-type groups (see e.g., \cite{G2015}).
We recall that a more general class of operators
modelled on $\Delta_{X}$ was first introduced by Baouendi, who studied the Dirichlet problem in weighted
Sobolev spaces in \cite{B1967}. When
$\alpha\in\mathbb{N} $ the operator in (\ref{Grushin0}) was
studied by Grushin \cite{Grushin1970,Grushin1971}, who established
its hypoellipticity. If $\alpha=2n$, with $n\in \mathbb{N}$, then
$\Delta_{X}$ is a sum of squares of $C^{\infty}$ vector
field satisfying H\"{o}rmander finite rank.
Franchi and Lanconelli \cite{FL} studied embedding theorems for Sobolev spaces related to
general vector fields (\ref{vector1}).  For some other
interesting properties related to the Baouendi-Grushin operators see \cite{grushin2020,chen2017,liu}.

The  strong unique continuation  of the following  Baouendi-Grushin operators
\begin{align}\label{second-order}
-\Delta_{X}u=Vu
\end{align}
was  established by Garofalo \cite{G1993} by  introducing an Almgren-type frequency function,  under the assumptions
\begin{equation}\label{V2}
|V|\leq \frac{f(\rho)}{\rho^2}\psi,
\end{equation}
for some non-decreasing $f:(0,R_0)\rightarrow \mathbb{R}^{+}$  such that
\begin{equation}\label{f}
\int_0^{R_0}\frac{f(r)}{r}dr< \infty,  \quad  \mbox{for some }\quad R_0>0.
\end{equation}
Here $\rho$ means a gauge norm associated to the vector field  (\ref{vector1}), and $0\leq\psi\leq1$ is a weight function (see (\ref{gauge}) and (\ref{psi}) below for details).
In \cite{G1993}, for the case $V=V^{+}-V^{-}$ satisfies
\begin{align}\label{V3}
0<V^{+}\leq C\frac{\psi}{\rho^2},\quad 0\leq V^{-}\leq\delta\frac{\psi}{\rho^2},
\end{align}
for some $\delta>0$ small enough,  the author established the  unique continuation property in the sense of solution decays exponentially at one point (see (\ref{exp}) below). Note that the weight $\psi$  degenerates
on the submanifold $\{x=0\}$ and so the result in \cite{G1993} does not allow to take $V\in L^{\infty}$.
Recently,  Banerjee, Mallick \cite{BM2020} obtained  the strong unique continuation property of (\ref{second-order}) in the special case $\alpha=1, n=1$ under the weaker assumptions
\begin{equation}\label{V4}
|V|\leq \frac{f(\rho)}{\rho^2}, \quad \mbox{or}\quad |V|\leq C\frac{\psi^{\varepsilon}}{\rho^2}, \quad \mbox{for some}\quad  \varepsilon>0,
\end{equation}
where $f$ is as in (\ref{f}).

In this paper, we consider the following  fourth order
Baouendi-Grushin type subelliptic  equation
\begin{equation}\label{L}
Lu\equiv -\Delta^2_{X}u+Vu=0,
\end{equation}
with $0<\alpha\leq 1$, where the potential $V$ satisfies the following strongly singular,
\begin{equation}\label{V}
|V|\leq \frac{c_0}{\rho^4},
\end{equation}
for some $c_0$ small  depending only on $m, n$ and $\alpha$.

Even for higher order uniform elliptic operators, the strong unique continuation property is quite different with and
more complex than second order operators.
Alinhac \cite{a} constructed a strong uniqueness counterexample for differential operators $P$ of any order in $\mathbb{R}^2$, under the condition that $P$ has two simple, nonconjugate complex characteristics. Colombini and Grammatico \cite{cg1999} gave strong uniqueness for  a differential inequality related to bi-Laplacian
\begin{align}\label{his1}
|\Delta^{2}u(x)|\leq C_1\frac{|u|}{|x|^4}+C_2\frac{|\nabla u|}{|x|^3}+C_3\sum_{|\alpha|=2}\frac{|D^{\alpha}u|}{|x|^2},\quad  \mbox{where}\quad C_3<\frac{3}{2}.
\end{align}
Borgne \cite{b2001} studied this problem for solutions to (\ref{his1}) additionally with the third order derivatives with the singular potential
$|x|^{-1+\varepsilon}$ $(\varepsilon>0)$.
 Colombini, Koch \cite{ck2010} studied the strong unique continuation  for solutions to the differential inequality (\ref{his1})  additionally with the third order derivatives with  the singular potential $\big|\ln|x|\big|^{-1-\varepsilon}|x|^{-1}$ for $\varepsilon>0$.
Recently, the present authors \cite{liu2022}  proved the  strong unique continuation to the following bi-Laplacian
\begin{equation*}
-\Delta^2 u+b_1(x)\cdot\nabla (\Delta u)+V_1(x)\Delta u+b_2(x)\cdot\nabla u+V_2(x)u=0,
\end{equation*}
with the strongly singular terms $b_1, b_2, V_1$ and $V_2$.
We refer to \cite{i2012,  linc2007, zhu2018} for the quantitative unique continuation properties of uniformly elliptic operators with higher order.

Before describing our main results, we will explain the definition of vanishing of infinite order in the subelliptic setting.
Usually, in subelliptic cases, a function $u\in L_{loc}^{2}$ is said to
vanish to infinite order at the origin if
\begin{align}\label{order}
\int_{B_r}u^2\psi dxdy=O(r^{k}), \quad \mbox{for every} \quad  k\in\mathbb{N},
\end{align}
as  $r\rightarrow 0$.
Moreover, we say that $u$ vanishes  at the origin more rapidly than any
power of $k$, namely that
\begin{align}\label{exp}
\int_{B_r}u^2\psi dxdy=O\left(\exp(-Br^{-\gamma})\right),
\end{align}
as $r\rightarrow0$ for some constants $B,\gamma>0$.

The main results of this paper are the following theorems.
\begin{theorem}\label{thm1}
Assume the potential $V$ satisfies (\ref{V}),  where $c_0$ is small in the sense of
\begin{align}\label{small-1}
\frac{4c_0}{(m-2)^2(Q-6)}+\frac{4}{(m-2)^4}<1,
\end{align}
or
\begin{align}\label{small-2}
\frac{4(c_0+1)}{(m-2)^2(Q-6)}<1,
\end{align}
where $Q=m+(\alpha+1)n$ denotes the homogenous dimension.
Let $u$ be  a weak solution of (\ref{L}).
Then there exist $C, A, \gamma>0$,  with $C$ and $\gamma$ only depending on $m$, $Q$, and $A$ depending on $u$, $m$, $Q$
such that for every ball $B_r$, for which $B_{2r}\subset\Omega$, it holds
\begin{align}\label{doubling}
\int_{B_{2r}}\left(u^2+(\Delta_{X}u)^2\right)\psi dxdy\leq C\exp\left(A r^{-\gamma}\right)\int_{B_{r}}\left(u^2+(\Delta_{X}u)^2\right)\psi dxdy.
\end{align}
\end{theorem}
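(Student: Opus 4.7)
The plan is to adapt the Garofalo--Lin frequency function method from \cite{G1993} for second-order Baouendi--Grushin operators to the fourth-order setting, by first reducing the equation to a coupled second-order system. Setting $w=\Delta_{X}u$, the equation $\Delta_{X}^{2}u=Vu$ becomes $\Delta_{X}w=Vu$, so one works with the pair $(u,w)$ and tracks their joint weighted $L^{2}$ behaviour on gauge balls $B_{r}$.

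First I would introduce, on $B_{r}$, a height functional and a generalized Dirichlet-type energy of the form
\begin{align*}
H(r)=\int_{\partial B_r}\bigl(u^2+w^2\bigr)\frac{|\nabla_X\rho|^2}{|\nabla\rho|}\,d\sigma, \qquad D(r)=\int_{B_r}\bigl(|\nabla_X u|^2+|\nabla_X w|^2\bigr)\psi\,dxdy-\int_{B_r}Vu^2\psi\,dxdy,
\end{align*}
together with the Almgren-type frequency $N(r)=rD(r)/H(r)$. The key geometric ingredient is the Rellich identity generated by the infinitesimal generator $Z$ of the anisotropic dilations attached to the gauge $\rho$; applying it to $u$ and to $w=\Delta_{X}u$ and integrating by parts against the weight $\psi$ produces expressions for $H'(r)$ and $D'(r)$. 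The fourth-order nature of the problem appears through cross-terms between the $u$- and $w$-equations, since $\int u\,\Delta_{X}w\,\psi=\int Vu^{2}\psi$ while $\int w\,\Delta_{X}u\,\psi=\int w^{2}\psi$.

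The main obstacle, and the heart of the argument, is absorbing the potential contribution $\int_{B_{r}}Vu^{2}\psi$ (together with the lower-order cross-terms) by the Dirichlet-type energy. This is where the refined Hardy--Rellich inequalities on gauge balls \emph{with boundary terms} that the paper develops are essential: they allow one to dominate $\int_{B_{r}}\rho^{-4}u^{2}\psi$ by a combination of $\int_{B_{r}}|\nabla_{X}w|^{2}\psi$ and boundary contributions on $\partial B_{r}$ that are compatible with $H(r)$ and its derivative. The constants $(m-2)^{-2}$ and $(Q-6)^{-1}$ in the smallness conditions (\ref{small-1})--(\ref{small-2}) are precisely the reciprocals of the best constants in these Hardy--Rellich inequalities; the smallness of $c_{0}$ is exactly what forces $D(r)\geq 0$ and keeps $N(r)$ well-defined and bounded below. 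The presence of the degenerate weight $\psi$ near $\{x=0\}$ and the need to match the boundary remainders in the Rellich computation with those in the Hardy--Rellich inequality is what restricts the parameter range to $0<\alpha\leq 1$.

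Once $N(r)$ is shown to satisfy a differential inequality of the form
\begin{align*}
N'(r)\geq -Cr^{\gamma-1}\bigl(N(r)+1\bigr),
\end{align*}
the map $r\mapsto (N(r)+1)e^{Ar^{\gamma}}$ is essentially nondecreasing, yielding a uniform bound on $N(r)$ on dyadic scales. Combining this with the standard logarithmic derivative identity $H'(r)/H(r)=(Q-1)/r+2N(r)/r+\text{(lower order)}$ and integrating from $r$ to $2r$ gives the boundary doubling inequality $H(2r)\leq C\exp(Ar^{-\gamma})H(r)$. A coarea-type integration in the radial variable then converts this into the claimed volume doubling estimate (\ref{doubling}) for $u^{2}+(\Delta_{X}u)^{2}$ against $\psi$.
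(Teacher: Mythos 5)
Your high-level strategy matches the paper's: split the fourth-order equation into the system $\Delta_X u=w$, $\Delta_X w=Vu$; build an Almgren-type frequency from a surface ``height'' and a Rellich-type Dirichlet energy generated by the vector field $Z$; control the singular potential by the Hardy--Rellich inequalities with boundary terms; and integrate the monotonicity to get doubling. However, there are two substantive gaps in the specific functionals and in the claimed differential inequality, and both would sink the argument as written.

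First, your $H$ and $D$ are not the scale-consistent ones. Since $w=\Delta_X u$ scales like $\lambda^{-2}u$ under the anisotropic dilations $\delta_\lambda$, the $u$-piece and $w$-piece of the height must be balanced by an $r^4$ factor: the paper takes $H(r)=\int_{\partial B_r}u^2\frac{\psi}{|\nabla\rho|}+r^{4}\int_{\partial B_r}w^2\frac{\psi}{|\nabla\rho|}$ and correspondingly $I(r)=I_1(r)+r^4I_2(r)$. Without this weight the two halves dominate at different scales, the analogue of $H'(r)=\frac{Q-1}{r}H+2I+4r^3H_2$ fails, and the frequency does not track the combined vanishing order of $u$ and $\Delta_X u$. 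Moreover, your Dirichlet-type energy $D(r)=\int_{B_r}(|\nabla_X u|^2+|\nabla_X w|^2)\psi-\int_{B_r}Vu^2\psi$ is not what integration by parts produces on $B_r$: there should be no $\psi$ weight in the bulk, and the coupling terms that actually appear are $\int_{B_r}uw$ (from $\int_{\partial B_r}u\frac{\nabla_Xu\cdot\nabla_X\rho}{|\nabla\rho|}=\int_{B_r}|\nabla_X u|^2+\int_{B_r}u\,\Delta_Xu$) and $\int_{B_r}Vuw$ (from $\int_{\partial B_r}w\frac{\nabla_Xw\cdot\nabla_X\rho}{|\nabla\rho|}=\int_{B_r}|\nabla_X w|^2+\int_{B_r}w\,\Delta_Xw$), not $\int Vu^2\psi$. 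Getting the cross-term $Vuw$ right matters: that is precisely the term whose mixed form $|u||w|/\rho^4$ is split by Cauchy into $u^2\psi/\rho^6$ and $w^2/(\rho^2\psi)$ and then attacked with Lemmas \ref{lem3}--\ref{lem5}; it is also where the explicit constants in (\ref{small-1})--(\ref{small-2}) enter.

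Second, the differential inequality you propose, $N'(r)\geq -Cr^{\gamma-1}(N(r)+1)$ with $\gamma>0$, is too strong and is inconsistent with the final estimate you are asked to prove. Integrating it would give a uniformly bounded $N$ and hence a polynomial doubling constant, whereas (\ref{doubling}) has a factor $\exp(Ar^{-\gamma})$. With the critically scaling potential $|V|\lesssim\rho^{-4}$ one only obtains $N'(r)/N(r)\geq -\beta/r$ on the set where $N$ is large (this is exactly what the Hardy--Rellich inequalities yield in the $O(r^{-1})I(r)$ error in $I'(r)$), which after integration gives $N(r)\lesssim r^{-\beta}$ and hence the exponential doubling. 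You should replace your monotonicity target with this weaker logarithmic estimate, and correspondingly restrict the analysis to the open set $\Omega_{r_0}=\{r:\,N(r)>\max(1,N(r_0))\}$ so that $H(r)/r<I(r)$ is available to absorb the error terms.
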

Using the doubling estimate in Theorem \ref{thm1} above, we obtain the following strong unique continuation property for fourth order
Baouendi-Grushin type subelliptic operators with singular potential.
\begin{theorem}\label{thm0}
Let $\Omega$ be a  connected open subset of $\mathbb{R}^{m+n}$, with $m>4, Q>6$, containing the origin. Assume that $V$ satisfies the same assumptions of Theorem \ref{thm1}.
Let $u$ be a weak solution of (\ref{L}) in $\Omega$. If $u$ vanishes to infinite order at the origin in the sense of (\ref{exp}), then $u\equiv 0$ in $\Omega$.
\end{theorem}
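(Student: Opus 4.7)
The plan is to combine the doubling estimate of Theorem~\ref{thm1} with the infinite-order vanishing (\ref{exp}) to force $u\equiv 0$ on a small gauge ball centered at the origin, and then propagate via connectedness. Write $F(r):=\int_{B_r}(u^2+(\Delta_X u)^2)\psi\,dxdy$. Hypothesis (\ref{exp}) controls only $\int_{B_r}u^2\psi$, whereas the doubling involves also $\Delta_X u$, so I would first produce a Caccioppoli-type inequality for the fourth-order equation $\Delta_X^2 u=Vu$: testing against $\eta^2 u$ for a cutoff $\eta$ with $\eta\equiv 1$ on $B_r$, $\mathrm{supp}\,\eta\subset B_{2r}$, $|X\eta|\lesssim r^{-1}$, and absorbing the singular term $\int Vu\cdot\eta^2 u\,\psi$ via the refined Hardy-Rellich inequalities on gauge balls that drive the proof of Theorem~\ref{thm1}, one obtains $\int_{B_r}(\Delta_X u)^2\psi\,dxdy\lesssim r^{-4}\int_{B_{2r}}u^2\psi\,dxdy$. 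Substituting (\ref{exp}) gives $F(r)=O(\exp(-\tilde B r^{-\gamma}))$ as $r\to 0$, with the polynomial $r^{-4}$ absorbed into the exponential.

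\textbf{Iteration of the doubling to kill $u$ on a ball.} Fix $R_0$ with $B_{2R_0}\subset\Omega$ and set $r_k:=R_0/2^k$. Iterating Theorem~\ref{thm1} along the chain $B_{r_{k-1}}\subset B_{r_k}$ produces $F(R_0)\le C^N\exp\bigl(A\sum_{k=1}^N r_k^{-\gamma}\bigr)F(r_N)\le \exp(A^*r_N^{-\gamma})F(r_N)$ after summing the geometric series $\sum_k r_k^{-\gamma}\le C_\gamma r_N^{-\gamma}$ and absorbing $C^N\le(R_0/r_N)^{\log_2 C}$ into the exponential. Comparing with the Caccioppoli vanishing bound of the first step yields $F(R_0)\le \exp((A^*-\tilde B)r_N^{-\gamma})$; since (\ref{exp}) is exactly the exponential-type vanishing naturally matched to the doubling exponent $\gamma$ of Theorem~\ref{thm1}, shrinking $R_0$ arranges $\tilde B>A^*$, and letting $N\to\infty$ forces $F(R_0)=0$. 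In particular $u\equiv 0$ (and $\Delta_X u\equiv 0$) on the gauge ball $B_{R_0}$.

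\textbf{Propagation to $\Omega$ and main obstacle.} Let $\mathcal{Z}:=\{z\in\Omega: u\equiv 0 \text{ in a neighborhood of }z\}$; it is open and, by the previous step, nonempty. To conclude $\mathcal{Z}=\Omega$ by connectedness one needs $\mathcal{Z}$ relatively closed. The real difficulty is that the Baouendi-Grushin operator degenerates on $\{x=0\}$ and is not translation invariant, so Theorem~\ref{thm1} does not directly provide strong unique continuation at an arbitrary base point. The codimension assumption $m>4$ is what saves the argument: the characteristic manifold $\{x=0\}$ has codimension $\ge 5$ in $\mathbb{R}^{m+n}$, so $\Omega\setminus\{x=0\}$ is connected and dense in $\Omega$, and on it $\Delta_X^2$ is uniformly elliptic of order four with $V$ locally bounded. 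Classical unique continuation for fourth-order elliptic equations with bounded potentials then propagates the zero set throughout $\Omega\setminus\{x=0\}$, and continuity of $u$ carries it across $\{x=0\}\cap\Omega$. This propagation across the degenerate characteristic set, together with the matching of the vanishing exponent in (\ref{exp}) to the iteration exponent $A^*$ in the step above, are the two main technical points of the proof.
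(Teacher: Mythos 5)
Your overall architecture is the same as the paper's: (1) show $\Delta_X u$ inherits exponential-order vanishing from $u$ via a Caccioppoli-type estimate (this is the paper's Theorem~\ref{thm3}), (2) iterate the doubling inequality of Theorem~\ref{thm1} to kill $u$ on a gauge ball $B_R$ at the origin, and (3) propagate to all of $\Omega$. Two points of detail differ from the paper, one of which is an actual imprecision on your side. First, in the Caccioppoli step you claim the pointwise bound $\int_{B_r}(\Delta_X u)^2\psi\lesssim r^{-4}\int_{B_{2r}}u^2\psi$ after ``absorbing'' $\int Vu^2\eta^4$ via the Hardy--Rellich inequalities of Section~3; but those inequalities control $\int\frac{u^2\psi}{\rho^6}$ only in terms of $\int|\nabla_X(\Delta_X u)|^2$ plus boundary terms, so they introduce higher derivatives rather than closing the estimate — the singular term $\int_{B_{2r}}Vu^2\eta^4$ is not absorbable that way. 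What the paper does (and what you would need) is to use the exponential vanishing hypothesis (\ref{exp}) together with a dyadic annular decomposition of $B_{2r}$ to bound $c_0\int_{B_{2r}}u^2/\rho^4$ directly by $\exp(-\widetilde{B}(2r)^{-\gamma})$; the end conclusion $F(r)=O(\exp(-\widetilde{B}r^{-\gamma}))$ is the same, so this is a repairable imprecision. Second, your propagation step is actually cleaner than the paper's: the paper first propagates along the characteristic set $\{x=0\}$ by invoking translation invariance of $\Delta_X^2$ in $y$ and re-running the monotonicity argument at nearby characteristic points, and only then appeals to elliptic unique continuation off $\{x=0\}$ (citing \cite{liu2022}); your route — $\Omega\setminus\{x=0\}$ is connected because the set $\{x=0\}$ has codimension $m\ge 2$, the equation is uniformly elliptic with bounded $V$ there, classical UCP propagates the zero set across this connected open set, and $\{x=0\}$ has measure zero — bypasses the first stage entirely. (As a small correction, $m\ge2$ already gives connectedness of $\Omega\setminus\{x=0\}$; the role of $m>4$ in the theorem is to ensure $m>2$ for the Hardy inequalities and that various $|x|$-weighted integrals converge, not the codimension bound you mention.) Finally, your remark that ``shrinking $R_0$ arranges $\widetilde{B}>A^*$'' does not actually resolve the competition between the doubling constant and the vanishing constant, since $A$ (and hence $A^*$) depends on $N(r_0)$ and does not shrink with $R_0$; however, the paper is no more careful at this point (its iteration arithmetic writes $(C\exp(AR^{-\gamma}))^k$, which understates the correct geometric accumulation $\sum_j 2^{j\gamma}$), so you are faithfully reproducing the paper's treatment rather than introducing a new gap.
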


\begin{remark}
With the help of the refined Hardy-Rellich inequalities (Lemma \ref{lem3}), similar to the proof of Theorem \ref{thm1}, for equation (\ref{second-order}), we can prove that \\
(i)\ if  $|V|\leq\frac{f(\rho)}{\rho^2}$ ($f$ is as in (\ref{f})) and $u$ vanishes to infinite order in the sense of (\ref{order}), then $u\equiv 0$;\\
(ii)\  if $V=V^{+}-V^{-}$, and $0<V^{+}\leq \frac{C}{\rho^2}$,  $0\leq V^{-}\leq\frac{\delta}{\rho^2}$ for some $\delta$ small,  assume that $u$ vanishes to infinite order in the sense of (\ref{exp}), then  $u\equiv 0$.\\
That is,   we can prove the strong unique continuation property  of (\ref{second-order}) under the case of removing
the weight $\psi$ in the assumptions (\ref{V2}) and $(\ref{V3})$ in \cite{G1993}.

\end{remark}

The argument we will adopt in this paper is to introduce an Almgren's type frequency function for the solutions $u$, and show its monotonicity  to obtain a doubling estimate. In order to prove the monotonicity of the frequency function and deal with the strongly singular potential such as  $\int_{B_r}\frac{Vuw}{|\nabla\rho|}$ with $w=\Delta_{X}u$, we should set up some refined Hardy-Rellich type inequalities for $\int_{B_r}\frac{u^2\psi}{\rho^6}$ and  $\int_{B_r}\frac{w^2}{\rho^2\psi}$ on the gauge balls.

On the other hand, the boundary term $\int_{\partial B_r}\frac{Vuw}{|\nabla\rho|}$ can not be directly controlled by $\int_{\partial B_r}\frac{u^2\psi}{|\nabla\rho|}$ and $\int_{\partial B_r}\frac{w^2\psi}{|\nabla\rho|}$ under  the assumption  (\ref{V}),  we  solve it by combining the divergence theorem and the higher order Hardy-Rellich inequalities which we established in  Section 3 (see (\ref{R6})).

The rest of the paper is organized as follows. In Section 2 we recall some notations and direct results  about  Baouendi-Grushin vector fields.
In order to control  the  strongly singular potentials, we prove some versions of Hardy-Rellich type inequalities on the gauge balls in Section 3.
In Section 4, we firstly introduce an Almgren's type frequency function  related to solutions of $Lu=0$, and prove the monotonicity property of it. Using the monotonicity property, we obtain a doubling estimate including the solutions $u$ as well as $\Delta_{X}u$. In Section 5,  we estimate a Caccioppoli type estimate to show that $\Delta_{X}u$  vanishes to infinite order provided $u$ does, and finally prove the strong unique continuous property.

\section{ Notations and  preliminary results}
We begin this section by giving some notations and some basic results about the Baouendi-Grushin type vector fields.

Let $(x,y)\in \mathbb{R}^{m}\times\mathbb{R}^{n}\equiv \mathbb{R}^{N}$.  Consider
the vector fields
\begin{equation}\label{vector}
X_i=\partial_ {x_i},
\hspace{1mm}i=1,\cdots,m,\hspace{2mm}
X_{m+j}=|x|^{\alpha} \partial_{y_j},\hspace{1mm}j=1,\cdots,n.
\end{equation}
We denote the Baouendi-Grushin gradient
and the corresponding divergence operator  as
\begin{align*}
&\nabla_{X}u=(X_1u,\cdots,X_{m+n}u), \quad \mbox{for a function}\  u, \\
&\mbox{div}_{X}F=\sum_{i=1}^{m+n}X_i F_i,\quad \mbox{for  a vector field}\ F=(F_1,\cdots, F_{m+n}),
\end{align*}
and the Baouendi-Grushin type Laplacian as
\begin{align}\label{Grushin}
\Delta_{X}u=\mbox{div}_{X}(\nabla_{X}u)=\Delta_{x}u+|x|^{2\alpha}\Delta_{y}u.
\end{align}
We note that the vector fields (\ref{vector}) are homogeneous of degree 1 with respect to the  anisotropic dilations
\begin{align}\label{dilation}
\delta_{\lambda}(x,y)=(\lambda x,\lambda^{\alpha+1} y),\quad  \lambda>0,
\end{align}
which leads to a homogeneous dimension  $Q=m+(\alpha+1)n$.  The infinitesimal generator of
the family of dilations (\ref{dilation}) is given by the vector field
\begin{equation}\label{Z0}
Z=\sum^{m}_{i=1}x_i\partial_{x_i}+(\alpha+1)\sum^{n}_{j=1}y_j\partial_{y_j}.
\end{equation}
Associated to the  vector
fields (\ref{vector}), for $(x,y)\in \mathbb{R}^{N}$,
there is a gauge norm given by
\begin{align}\label{gauge}
\rho(x,y)=\left(|x|^{2(\alpha+1)}+(\alpha+1)^2|y|^2\right)^{\frac{1}{2(\alpha+1)}}.
\end{align}
A gauge-ball and its gauge-sphere with respect to  $\rho$ centered at the origin with radius $r$ are
\begin{align*}
B_r=\{(x,y)\mid \rho(x,y)< r\}, \quad  \partial {B_r}=\{(x,y)\mid  \rho(x,y)=r\},
\end{align*}
respectively. Since $\rho\in C^{\infty}(\mathbb{R}^{N}\backslash\{(0,0)\})$, the outer unit normal on $\partial {B_r}$ is given by
$\vec{n}=\frac{\nabla\rho}{|\nabla\rho|}$,  where $\nabla\rho$ means the ordinary Euclidean gradient of $\rho$.
The gauge-ball  with respect to  $\rho$ centered at $(0,y_0)$ with radius $r$ is
\begin{align*}
B_{r}(0,y_0)=\left\{(x,y)\Large\mid  \left(|x|^{2(\alpha+1)}+(\alpha+1)^2|y-y_0|^2\right)^{\frac{1}{2(\alpha+1)}}<r\right\}.
\end{align*}
Introducing the angle function
\begin{align}\label{psi}
\psi\equiv |\nabla_{X}\rho|^2=\frac{|x|^{2\alpha}}{\rho^{2\alpha}}.
\end{align}
The function $\psi$ vanishes on the submanifold $\{0\}\times\mathbb{R}^{n}$ and clearly $0\leq\psi\leq1$.
In the following, we collect some important identities (see \cite{G1993}), which will  be used frequently in later sections.
\begin{align}\label{psi2}
\Delta_{X}\rho=\frac{Q-1}{\rho} \psi, \quad  \left|X_iX_j\rho\right|\leq \frac{C}{\rho}.
\end{align}
\begin{align}\label{Z}
Z\rho=\rho.
\end{align}
\begin{align}\label{Z2}
\nabla_{X}u\cdot \nabla_{X}\rho= \frac{Zu}{\rho}\psi.
\end{align}
\begin{align}\label{com}
[X_i,Z]=Z,\quad i=1,\cdots, m+n.
\end{align}

In order to study the weak solution to  equation (\ref{L}),
we introduce a function space associated with vector fields (\ref{vector1}), which is analogue to the corresponding Sobolev space \cite{Xu}. For
any integer $k\geq 1$, $p\geq 1$ and $\Omega\subset\mathbb{R^{N}}$, we define
\begin{align*}
M^{k,p}(\Omega)=\left\{ f\in L^{p}(\Omega) \mid X^{J}f\in L^{p}(\Omega), \forall J=(j_1,\cdots,j_s), |J|\leq k\right\}
\end{align*}
where $X^{J}f=X_{j_1}\cdots X_{j_s}f$ and define the norm in $M^{k,p}$ to be
\begin{align*}
\|f\|_{M^{k,p}(\Omega)}=\left(\sum_{|J|\leq k}\|X^{J}f\|^{p}_{L^{p}(\Omega)}\right)^{\frac{1}{p}}.
\end{align*}
We denote by $M_0^{k,p}(\Omega)$ the closure of $C_0^{\infty}(\Omega)$ in $M^{k,p}(\Omega)$.

A function $u\in M_{loc}^{2,2}(\Omega)$ is called a weak solution of (\ref{L}), it means
\begin{align*}
\int_{\Omega}\Delta_{X}u\Delta_{X}\phi=\int_{\Omega}Vu\phi,
\end{align*}
for any test function $\phi\in M_0^{2,2}(\Omega)$. In the above integral, and henceforth, we
omit indicating the Lebesgue measure  $dxdy$ in the relevant integrals.

In the rest of this section, we recall the  Sobolev embedding  inequality for the vector fields, and apply which to show  a equivalent  definition  of vanishing to infinite order in the sense of (\ref{exp}). The Sobolev embedding  inequality for the vector fields  (see \cite{M2006}) states
\begin{align}\label{sobolev}
\|u\|_{L^{2^{*}}(\Omega)}\leq C\|u\|_{M^{1,2}(\Omega)},
\end{align}
where $2^{*}=\frac{2Q}{Q-2}$ and constant $C=C(Q,\Omega)>0$.
\begin{proposition}\label{prop}
For any function $u\in M^{1,2}(B_r)$, then $u$ vanishes to infinite order  at the origin in
the sense of (\ref{exp}) if and only if it vanishes to infinite order in the following sense,
\begin{align}\label{exp2}
\int_{B_r}u^2 = O\left(\exp(-B_1r^{-\gamma})\right),
\end{align}
as $r\rightarrow0$ for  some constants  $B_1, \gamma>0$.
\end{proposition}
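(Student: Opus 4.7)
The plan is to prove the equivalence by estimating each direction separately. One direction is immediate: since $0\leq\psi\leq 1$, we have $\int_{B_r}u^2\psi\leq\int_{B_r}u^2$, so (\ref{exp2}) trivially implies (\ref{exp}) with the same constants $B_1$ and $\gamma$.

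For the nontrivial direction (\ref{exp}) $\Rightarrow$ (\ref{exp2}), I plan to use a three-factor H\"{o}lder interpolation that trades a small loss in the exponential rate for the removal of the degenerate weight $\psi$. I write pointwise
$$u^2 = (u^2\psi)^{\theta}\cdot u^{2(1-\theta)}\cdot\psi^{-\theta}$$
for a small $\theta\in(0,1)$ to be chosen, and apply H\"{o}lder with exponents $q_1=1/\theta$, $q_2=Q/[(Q-2)(1-\theta)]$, and $q_3=Q/[2(1-\theta)]$, whose reciprocals sum to $1$ by construction. This yields
$$\int_{B_r}u^2\leq\Bigl(\int_{B_r}u^2\psi\Bigr)^{\theta}\Bigl(\int_{B_r}|u|^{2^{*}}\Bigr)^{2(1-\theta)/2^{*}}\Bigl(\int_{B_r}\psi^{-\theta q_3}\Bigr)^{1/q_3}.$$

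Next I would bound each of the three factors. The first is controlled by the hypothesis (\ref{exp}). The second is bounded by $C\|u\|_{M^{1,2}(B_R)}^{2(1-\theta)}$ for any $r\leq R$ via the Sobolev embedding (\ref{sobolev}) applied on a fixed reference ball $B_R$, and it is finite since $u\in M^{1,2}$. For the third, note that $\psi^{-1}=\rho^{2\alpha}/|x|^{2\alpha}$, so on $B_r$ the integrand is dominated by $r^{2\alpha\theta q_3}|x|^{-2\alpha\theta q_3}$. Since $\{x=0\}$ has codimension $m$ in $\mathbb{R}^{m+n}$, the singular integral $\int_{B_r}|x|^{-2\alpha\theta q_3}\,dxdy$ is finite provided $2\alpha\theta q_3<m$, equivalently $\theta<m/(\alpha Q+m)$. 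Any $\theta$ in this open range works, which is nonempty since $m\geq 1$.

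Collecting these three estimates gives $\int_{B_r}u^2\leq C\exp(-B\theta r^{-\gamma})$, which is exactly (\ref{exp2}) with $B_1=B\theta$ and the same exponent $\gamma$. The main delicate point is simultaneously satisfying the H\"{o}lder reciprocal condition, matching the $L^{2^{*}}$-exponent required by the Sobolev inequality in the middle factor, and keeping the inverse weight $\psi^{-\theta q_3}$ locally integrable across the singular set $\{x=0\}$; the algebra forces $q_3=Q/[2(1-\theta)]$, after which the binding constraint reduces to the single inequality $\theta<m/(\alpha Q+m)$.
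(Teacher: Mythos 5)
Your proof is correct, and it streamlines the paper's argument in a genuine way. The paper proceeds in two stages: it first bounds $\int_{B_r}|u|$ by a two-factor H\"{o}lder (peeling off $\psi^{-1/(q-1)}$), interpolates the weighted $L^q$-norm between the weighted $L^2$- and $L^{2^*}$-norms, invokes Sobolev, and then interpolates again, $\|u\|_{L^2}\leq\|u\|_{L^1}^{\tau}\|u\|_{L^{2^*}}^{1-\tau}$, to climb back up to $L^2$. Your version collapses all of this into a single three-factor H\"{o}lder applied directly to $\int_{B_r}u^2$, with the split $u^2=(u^2\psi)^{\theta}\,|u|^{2(1-\theta)}\,\psi^{-\theta}$ engineered so that the middle factor hits exactly $L^{2^*}$ (so Sobolev applies) and the third factor is a fixed negative power of $\psi$. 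The exponent arithmetic $1/q_1+1/q_2+1/q_3=1$ checks out, the Sobolev embedding on a fixed reference ball $B_R$ controls the middle factor uniformly in $r$ (which is cleaner than the paper's application on $B_r$, where the constant could in principle depend on $r$), and your integrability threshold $\theta<m/(\alpha Q+m)$ is precisely the local-$L^1$ condition on $|x|^{-2\alpha\theta q_3}$ near the codimension-$m$ singular set. Both proofs rest on the same three ingredients (H\"{o}lder, Sobolev, local integrability of $\psi^{-\varepsilon}$), so the mathematical content is the same; what your one-shot H\"{o}lder buys is a shorter derivation that avoids the detour through $L^1$, at the cost of a slightly more elaborate exponent bookkeeping up front.
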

\begin{proof}
The proof is similar to  Lemma 2.4 in \cite {BM2020}. We give it here for the sake of
completeness.

Indeed, for some $q\in(2, 2^{*})$, H\"{o}lder inequality infers that
\begin{align}\label{q}
\int_{B_r}|u|\leq \left(\int_{B_r}u^q\psi\right)^{\frac{1}{q}} \left(\int_{B_r}\psi^{-\frac{1}{q-1}}\right)^{\frac{q-1}{q}}.
\end{align}
Noting that $\frac{1}{q-1}<1$, so $\frac{2\alpha}{q-1}<2$, which together with $m>2$ yields that for $r<1$,
\begin{align*}
\int_{B_r}\psi^{-\frac{1}{q-1}}dxdy\leq C(n)\int_{|x|\leq r}|x|^{-\frac{2\alpha}{q-1}}dx<\infty.
\end{align*}
By using an interpolation inequality and  (\ref{sobolev}), for $\frac{\theta}{2}+\frac{1-\theta}{2^{*}}=\frac{1}{q}$, it holds
\begin{align*}
\left(\int_{B_r}u^q\psi\right)^{\frac{1}{q}}&\leq
\left(\int_{B_r}u^2\psi\right)^{\frac{\theta}{2}}\left(\int_{B_r}u^{2^{*}}\psi\right)^{\frac{1-\theta}{2^{*}}}\\[2mm]
&\leq \left(\int_{B_r}u^2\psi\right)^{\frac{\theta}{2}} \|u\|^{1-\theta}_{M^{1,2}(B_r)}\\[2mm]
&\leq C e^{-\frac{\theta}{2}Br^{-\gamma}},
\end{align*}
by (\ref{exp}).
Therefore, (\ref{q}) implies
\begin{align*}
\int_{B_r}|u|\leq  C  e^{-\frac{\theta}{2}Br^{-\gamma}}.
\end{align*}
Using an interpolation inequality again,  for $\tau+\frac{1-\tau}{2^*}=\frac{1}{2}$, it holds
\begin{align*}
\|u\|_{L^2}\leq \|u\|^{\tau}_{L^1}\|u\|^{1-\tau}_{L^{2^{*}}}
\leq e^{-\tau\frac{\theta}{2}Br^{-\gamma}},
\end{align*}
that is (\ref{exp2}) with $B_1=\tau\theta B$.
\end{proof}

\section{Some Hardy-Rellich inequalities  in gauge balls}

In this section, we prove some local versions of Hardy-Rellich type inequalities related to vector fields (\ref{vector}) on  gauge balls. They will play an important role in showing doubling estimates  and are also independently interesting themselves.

There are many versions of Hardy type inequalities related the  vector fields (see e.g., \cite{D2004, hardy2004, R2017, 2019PAMS}). However, those inequalities are established for functions with compact support. What we need is a local version.
In \cite{GL} the authors obtained the following  inequality for the Heisenberg group, which can be proved for  Baouendi-Grushin  vector fields (\ref{vector}),
\begin{align*}
\int_{B_r}\frac{ u^2\psi}{\rho^2}\leq \left(\frac{2}{Q-2}\right)^2\int_{B_r}|\nabla_{X}u|^2+\frac{2}{Q-2}r^{-1}\int_{\partial B_r}\frac{u^2\psi}{|\nabla\rho|}.
\end{align*}

Our task is to work under the assumption
$|V|\leq \frac{C}{\rho^4}$ for $(\ref{L})$,  we need to estimate some terms
or quantities  such as  $\int_{B_r}\frac{ u^2}{\rho^2\psi}$ and $\int_{B_r}\frac{u^2\psi}{\rho^6}$.
Let us start from the following  refined  Hardy inequality for the vector fields  (\ref{vector}) in a gauge ball.
\begin{lemma} \label{lem3}
Assume $m>2$, for every $u\in M^{1,2}(B_r)$, it holds
\begin{align}\label{hhardy-1}
\int_{B_r}\frac{ u^2}{|x|^2}\leq \left(\frac{2}{m-2}\right)^2\int_{B_r}|\nabla_{X}u|^2+\frac{2}{m-2}r^{-1}\int_{\partial B_r}\frac{u^2\psi}{|\nabla\rho|}.
\end{align}
Moreover,  if $0<\alpha\leq1$, then the following inequality is valid,
\begin{align}\label{hhardy-2}
\int_{B_r}\frac{ u^2}{\rho^2\psi}\leq \left(\frac{2}{m-2}\right)^2\int_{B_r}|\nabla_{X}u|^2+\frac{2}{m-2}r^{-1}\int_{\partial B_r}\frac{u^2\psi}{|\nabla\rho|}.
\end{align}
\end{lemma}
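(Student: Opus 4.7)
\textbf{Proof proposal for Lemma \ref{lem3}.} The plan is to derive the first inequality from the divergence theorem applied to the vector field $F=\bigl(\tfrac{x}{|x|^2},0\bigr)\in\mathbb R^m\times\mathbb R^n$, and then deduce the second inequality as a pointwise consequence of the first, using the hypothesis $0<\alpha\le 1$.

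First, a direct Euclidean computation gives
\begin{equation*}
\mathrm{div}\,F=\sum_{i=1}^{m}\partial_{x_i}\!\left(\frac{x_i}{|x|^2}\right)=\frac{m-2}{|x|^2}.
\end{equation*}
Applying the divergence theorem to $u^2F$ on $B_r$,
\begin{equation*}
(m-2)\int_{B_r}\frac{u^2}{|x|^2}=-2\int_{B_r}u\,\nabla_x u\cdot\frac{x}{|x|^2}+\int_{\partial B_r}u^2\,\frac{x}{|x|^2}\cdot\vec n.
\end{equation*}
On $\partial B_r$ the outer normal is $\vec n=\nabla\rho/|\nabla\rho|$; since $\nabla_x\rho=\rho^{-(2\alpha+1)}|x|^{2\alpha}x$ and $\psi=|x|^{2\alpha}/\rho^{2\alpha}$, I compute
\begin{equation*}
\frac{x}{|x|^2}\cdot\vec n=\frac{x\cdot\nabla_x\rho}{|x|^2|\nabla\rho|}=\frac{\rho^{-(2\alpha+1)}|x|^{2\alpha}}{|\nabla\rho|}=\frac{\psi}{\rho|\nabla\rho|},
\end{equation*}
so the boundary integral equals $r^{-1}\!\int_{\partial B_r}\frac{u^2\psi}{|\nabla\rho|}$.

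Next I bound the interior term by Cauchy--Schwarz and Young's inequality with parameter $\varepsilon=(m-2)/2$:
\begin{equation*}
-2\int_{B_r}u\,\nabla_x u\cdot\frac{x}{|x|^2}\le \frac{m-2}{2}\int_{B_r}\frac{u^2}{|x|^2}+\frac{2}{m-2}\int_{B_r}|\nabla_x u|^2.
\end{equation*}
Substituting back and absorbing the $u^2/|x|^2$ term into the left side yields
\begin{equation*}
\int_{B_r}\frac{u^2}{|x|^2}\le\left(\frac{2}{m-2}\right)^{\!2}\!\!\int_{B_r}|\nabla_x u|^2+\frac{2}{m-2}\,r^{-1}\!\int_{\partial B_r}\frac{u^2\psi}{|\nabla\rho|}.
\end{equation*}
Since $|\nabla_x u|^2\le|\nabla_X u|^2$, this establishes \eqref{hhardy-1}. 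By a standard density argument (approximating $u\in M^{1,2}(B_r)$ by smooth functions and handling the singular point $x=0$ by cutting off a neighbourhood of $\{x=0\}$ and passing to the limit, using that $m>2$ makes $\int|x|^{-2}\,dxdy$ locally integrable away from the singular set after cutting), the bound extends to all $u\in M^{1,2}(B_r)$.

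For \eqref{hhardy-2} I only need the pointwise comparison
\begin{equation*}
\frac{1}{\rho^2\psi}=\frac{\rho^{2\alpha-2}}{|x|^{2\alpha}}=\frac{1}{|x|^2}\left(\frac{|x|}{\rho}\right)^{2-2\alpha}\le\frac{1}{|x|^2},
\end{equation*}
which holds because $|x|\le\rho$ and $2-2\alpha\ge 0$ when $0<\alpha\le 1$; plugging this into \eqref{hhardy-1} finishes the proof. The only subtle point is justifying the divergence theorem for $u\in M^{1,2}(B_r)$ despite the singular factor $x/|x|^2$, which is why the hypothesis $m>2$ is essential; this is the main technical obstacle, and it is handled by the standard cutoff-and-approximation argument described above.
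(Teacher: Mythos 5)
Your proof is correct and follows essentially the same route as the paper: you use the same vector field $F=h=\bigl(\tfrac{x}{|x|^2},0\bigr)$, the same computation of its divergence and of $h\cdot\nabla_X\rho=\rho^{-1}\psi$ on the boundary, the same Young/Cauchy absorption with $\varepsilon=(m-2)/2$, and the same pointwise comparison $\rho^2\psi\ge|x|^2$ for $\alpha\le 1$ to deduce \eqref{hhardy-2} from \eqref{hhardy-1}. The only cosmetic difference is that you phrase the integration by parts via the Euclidean divergence theorem whereas the paper writes it in terms of $\mathrm{div}_X$, which coincide here since $h$ has no $y$-components; you also spell out the standard density/cutoff justification, which the paper leaves implicit.
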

\begin{proof}
Let
\begin{equation*}
h=\frac{1}{|x|^2}
\left(
\begin{array}{lll}
x \\
0
\end{array}\right)
\in \mathbb{R}^{m+n}
\end{equation*}
 A direct calculation shows
that
\begin{equation}\label{h-1}
\mbox{div}_{X}h=\frac{m-2}{|x|^2}, \hspace{3mm} \quad
h\cdot \nabla_{X}\rho=\rho^{-(2\alpha+1)}|x|^{2\alpha}=\rho^{-1}\psi.
\end{equation}
Hence, applying integration by parts, using (\ref{h-1}), and the Cauchy inequality, one has
\begin{align*}
\int_{B_r}\frac{ u^2}{|x|^2}&=\frac{1}{m-2}\int_{B_r}\mbox{div}_{X}h u^2 \\[2mm]
&=-\frac{1}{m-2}\int_{B_r}\nabla_{X}u^2\cdot h +\frac{1}{m-2}\int_{\partial B_r}u^2 h\cdot\frac{\nabla_{X}\rho}{|\nabla\rho|}\\[2mm]
&\leq\frac{2}{m-2}\int_{B_r}\frac{|u||\nabla_{X}u|}{|x|}+\frac{1}{m-2}r^{-1}\int_{\partial B_r}\frac{u^2\psi}{|\nabla\rho|} \\[2mm]
&\leq\frac{1}{2}\int_{B_r}\frac{ u^2}{|x|^2}+\frac{2}{(m-2)^2}\int_{B_r}|\nabla_Xu|^2+\frac{1}{m-2}r^{-1}\int_{\partial B_r}\frac{u^2\psi}{|\nabla\rho|},
\end{align*}
which yields the desired result (\ref{hhardy-1}).
Furthermore, if  $\alpha\leq 1$, then $\rho^2\psi\geq |x|^2$, so (\ref{hhardy-1}) yields (\ref{hhardy-2}) directly.
\end{proof}

\begin{lemma}\label{lem4}
Assume $Q>6$. Then, for any $u\in M^{3,2}(B_r)$, it holds
\begin{align}\label{ineq6-1}
\int_{B_r} \frac{u^2\psi}{\rho^6}\leq \frac{1}{(Q-6)^2}\int_{B_r}\frac{|\Delta_{X}u|^2}{\rho^2\psi}+\frac{2}{Q-6}r^{-5}\int_{\partial B_r}\frac{u^2\psi}{|\nabla\rho|}.
\end{align}
Moreover, if $m>2$ and $0<\alpha\leq1$, it holds
\begin{align}\label{ineq6-2}
\int_{B_r} \frac{u^2\psi}{\rho^6}&\leq \frac{4}{(m-2)^2(Q-6)^2}\int_{B_r}|\nabla_X(\Delta_{X}u)|^2\nonumber\\[2mm]
&+\frac{2}{(m-2)(Q-6)^2}r^{-1}\int_{\partial B_r} \frac{(\Delta _{X}u)^2\psi}{|\nabla\rho|}+\frac{2}{Q-6}r^{-5}\int_{\partial B_r}\frac{u^2\psi}{|\nabla\rho|}.
\end{align}
\end{lemma}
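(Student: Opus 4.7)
The plan is to derive \eqref{ineq6-1} from the key algebraic identity
\[
\Delta_X \rho^{-4} \;=\; -4(Q-6)\,\rho^{-6}\psi,
\]
a direct consequence of \eqref{psi2} together with $|\nabla_X\rho|^2=\psi$; more generally, for any real exponent $\beta$ one checks that $\Delta_X(\rho^\beta)=\beta(\beta+Q-2)\rho^{\beta-2}\psi$. This rewrites
\[
\int_{B_r}\frac{u^2\psi}{\rho^6} \;=\; -\frac{1}{4(Q-6)}\int_{B_r} u^2\,\Delta_X \rho^{-4}.
\]
I would then apply Green's formula (two integrations by parts) to shift $\Delta_X$ from $\rho^{-4}$ onto $u^2$, using $\Delta_X(u^2)=2u\Delta_X u+2|\nabla_X u|^2$. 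The boundary contributions are evaluated from $\nabla_X\rho^{-4}\cdot\nabla_X\rho = -4\rho^{-5}\psi$ and, via \eqref{Z2}, $\nabla_X(u^2)\cdot\nabla_X\rho = 2uZu\,\psi/\rho$. Dropping the non-negative interior term $2\int|\nabla_X u|^2/\rho^4$ then yields an inequality of the schematic form
\[
2(Q-6)\int_{B_r}\frac{u^2\psi}{\rho^6} \;\le\; -\int_{B_r}\frac{u\Delta_X u}{\rho^4} + \mbox{boundary terms}.
\]

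The mixed interior integral is controlled by factoring
\[
\frac{u\Delta_X u}{\rho^4} \;=\; \frac{u\sqrt{\psi}}{\rho^3}\cdot \frac{\Delta_X u}{\rho\sqrt{\psi}}
\]
and applying weighted Cauchy--Schwarz, with the parameter chosen so that the resulting $\int u^2\psi/\rho^6$ contribution is absorbed into the left-hand side; this produces the coefficient $1/(Q-6)^2$ in front of $\int (\Delta_X u)^2/(\rho^2\psi)$. Once \eqref{ineq6-1} is in hand, \eqref{ineq6-2} follows at once by plugging in the refined local Hardy inequality \eqref{hhardy-2} of Lemma~\ref{lem3} with $w=\Delta_X u$, which replaces $\int (\Delta_X u)^2/(\rho^2\psi)$ by $\frac{4}{(m-2)^2}\int|\nabla_X(\Delta_X u)|^2$ plus its boundary analogue; distributing the factor $1/(Q-6)^2$ then reproduces the constants stated in \eqref{ineq6-2}.

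The main technical obstacle I anticipate is the clean bookkeeping of the boundary contributions from Green's identity. Besides the expected clean term $r^{-5}\int_{\partial B_r}u^2\psi/|\nabla\rho|$, the second integration by parts introduces a non-sign-definite cross term proportional to $r^{-5}\int_{\partial B_r}uZu\,\psi/|\nabla\rho|$ coming from $\rho^{-4}\nabla_X(u^2)\cdot\nabla_X\rho$ on $\partial B_r$. I would dispose of this by rewriting $2uZu=Z(u^2)$ and applying the Euclidean divergence theorem to the vector field $u^2\psi\rho^{-6}Z$; using $\mbox{div}\,Z=Q$, the scale invariance $Z\psi=0$ inherited from the dilation \eqref{dilation}, $Z\rho^{-6}=-6\rho^{-6}$, and $Z\cdot\nabla\rho=\rho$ on $\partial B_r$, one obtains a cancellation that ultimately leaves only the single clean boundary contribution $\frac{2}{Q-6}r^{-5}\int_{\partial B_r}u^2\psi/|\nabla\rho|$ appearing in \eqref{ineq6-1}.
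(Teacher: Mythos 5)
Your overall scaffolding is the same as the paper's: you start from $\Delta_X\rho^{-4}=-4(Q-6)\rho^{-6}\psi$, apply Green's identity, expand $\Delta_X u^2=2u\Delta_X u+2|\nabla_X u|^2$, use weighted Cauchy--Schwarz to absorb $\int u^2\psi/\rho^6$, and then obtain \eqref{ineq6-2} by applying Lemma~\ref{lem3} to $\Delta_X u$. The second step (getting \eqref{ineq6-2} from \eqref{ineq6-1}) is correct and identical to the paper's.

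However, your treatment of the non-sign-definite boundary cross term is wrong, and this is not a cosmetic issue but the crux of the lemma. After Green's identity, that cross term equals
\[
J_2=\frac{1}{2(Q-6)}\,r^{-5}\int_{\partial B_r}\frac{uZu\,\psi}{|\nabla\rho|}
=\frac{1}{4(Q-6)}\,r^{-5}\int_{\partial B_r}\frac{Z(u^2)\,\psi}{|\nabla\rho|}.
\]
You propose to dispose of it by applying the divergence theorem to $u^2\psi\rho^{-6}Z$. But compute the flux of that field through $\partial B_r$: since $Z\cdot\nabla\rho=\rho=r$ on $\partial B_r$, the boundary integral is $r^{-5}\int_{\partial B_r} u^2\psi/|\nabla\rho|$, not $r^{-5}\int_{\partial B_r} Z(u^2)\psi/|\nabla\rho|$. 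So your divergence identity relates $\int_{B_r}Z(u^2)\psi\rho^{-6}$ (a volume integral) and $\int_{B_r}u^2\psi\rho^{-6}$ to the clean boundary term, but it says nothing about $J_2$. Any vector field that would reproduce $J_2$'s boundary value has $Z(u^2)$ (or $u\nabla_X u$) in it, and its divergence then brings in second derivatives such as $Z^2(u^2)$, which is a dead end.

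A second, related, mistake: you plan to \emph{drop} the interior term $\int_{B_r}|\nabla_X u|^2/\rho^4$ from $J_1$. You must not drop it, because it is exactly what cancels the $|\nabla_X u|^2$ contribution produced by $J_2$. The paper's device is to pull out $\rho^{-4}=r^{-4}$ on $\partial B_r$ and apply the Baouendi--Grushin divergence theorem to $u\nabla_X u$:
\[
J_2=\frac{r^{-4}}{2(Q-6)}\int_{\partial B_r} u\,\frac{\nabla_X u\cdot\nabla_X\rho}{|\nabla\rho|}
=\frac{r^{-4}}{2(Q-6)}\int_{B_r}\left(u\Delta_X u+|\nabla_X u|^2\right),
\]
and then bound $r^{-4}\le\rho^{-4}$ inside $B_r$. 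The resulting $\frac{1}{2(Q-6)}\int_{B_r}|\nabla_X u|^2/\rho^4$ precisely cancels the negative $|\nabla_X u|^2$ term in $J_1$, leaving $\frac{1}{Q-6}\int_{B_r}|u||\Delta_X u|/\rho^4$ plus the clean $J_3$ boundary term; Cauchy--Schwarz with parameter $Q-6$ then yields \eqref{ineq6-1} with the coefficients as stated. (As a further check that something is off in your version: with $J_1$'s $|\nabla_X u|^2$ dropped, the Cauchy--Schwarz absorption would give a coefficient $\tfrac{1}{4(Q-6)^2}$ on $\int(\Delta_X u)^2/(\rho^2\psi)$ rather than $\tfrac{1}{(Q-6)^2}$, so the bookkeeping you describe is not internally consistent either.)

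In short, the $J_2$ term must be converted to a volume integral via the divergence theorem applied to $u\nabla_X u$ so that its gradient part cancels $J_1$'s; your alternative vector-field argument does not capture $J_2$ at all, and dropping the gradient term in $J_1$ removes exactly the piece that makes the cancellation work.
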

\begin{proof}
Noting
\begin{equation*}
\Delta_{X}\rho^{-4}=-4(Q-6)\rho^{-6}\psi.
\end{equation*}
Integrating by parts, one obtain
\begin{align} \label{6-1}
\int_{B_r}\frac{u^2\psi}{\rho^6}&=-\frac{1}{4(Q-6)}\int_{B_r}\Delta_{X}\left(\frac{1}{\rho^4}\right)u^2\nonumber\\[2mm]
&=-\frac{1}{4(Q-6)}\int_{B_r}\frac{1}{\rho^4} \Delta_{X}u^2+\frac{1}{4(Q-6)}\int_{\partial B_r}\frac{1}{\rho^4}\nabla_{X}u^2\cdot\frac{\nabla_{X}\rho}{|\nabla\rho|}\nonumber\\[2mm]
&-\frac{1}{4(Q-6)}\int_{\partial B_r}u^2\nabla_{X}\left(\frac{1}{\rho^4}\right)\cdot\frac{\nabla_{X}\rho}{|\nabla\rho|}\nonumber\\[2mm]
&\equiv J_1+J_2+J_3.
\end{align}
Since $Q>6$, the term $J_1$ can be estimated as
\begin{align}\label{J1}
J_1 &=-\frac{1}{2(Q-6)}\int_{B_r}\frac{u\Delta_{X}u}{\rho^4}-\frac{1}{2(Q-6)}\int_{B_r}\frac{|\nabla_{X}u|^2}{\rho^4}\nonumber\\[2mm]
&\leq \frac{1}{2(Q-6)}\int_{B_r}\frac{|u||\Delta_{X}u|}{\rho^4}-\frac{1}{2(Q-6)}\int_{B_r}\frac{|\nabla_{X}u|^2}{\rho^4},
\end{align}
By using the divergence theorem,
\begin{align}\label{J2}
J_2 &=\frac{1}{2(Q-6)}r^{-4}\int_{\partial B_r} u \frac{\nabla_Xu\cdot\nabla_{X}\rho}{|\nabla\rho|}\nonumber\\[2mm]
&=\frac{1}{2(Q-6)}r^{-4}\int_{B_r}\mbox{div}_{X}\left(u\nabla_Xu\right)\nonumber\\[2mm]
&=\frac{1}{2(Q-6)}r^{-4}\int_{B_r}u\Delta_{X}u + \frac{1}{2(Q-6)}r^{-4}\int_{B_r}|\nabla_{X}u|^2\nonumber\\[2mm]
&\leq \frac{1}{2(Q-6)}\int_{B_r}\frac{|u||\Delta_{X}u|}{\rho^4}+\frac{1}{2(Q-6)}\int_{B_r}\frac{|\nabla_{X}u|^2}{\rho^4},
\end{align}
where we have used  $Q>6$ again in the last inequality, then the last term on the right-hand side of (\ref{J2}) can cancel  out that of (\ref{J1}).
The term $J_3$ becomes
\begin{align}\label{J3}
J_3 &=\frac{1}{(Q-6)}\int_{\partial B_r} u^2 \rho^{-5} \frac{|\nabla_X\rho|^2}{|\nabla\rho|}\nonumber\\[2mm]
&=\frac{1}{(Q-6)}r^{-5}\int_{\partial B_r} u^2 \frac{\psi}{|\nabla\rho|}.
\end{align}
Substituting  (\ref{J1})-(\ref{J3}) into (\ref{6-1}),
 \begin{align*}
\int_{B_r}\frac{u^2\psi}{\rho^6} &\leq \frac{1}{Q-6}\int_{B_r}\frac{|u||\Delta_{X}u|}{\rho^4}+ \frac{1}{Q-6}r^{-5}\int_{\partial B_r} u^2\frac{\psi}{|\nabla\rho|}\nonumber\\[2mm]
&\leq \frac{1}{2} \int_{B_r}\frac{u^2\psi}{\rho^6}+ \frac{1}{2(Q-6)^2}\int_{B_r}\frac{|\Delta_{X}u|^2}{\rho^2\psi}+\frac{1}{Q-6}r^{-5}\int_{\partial B_r} u^2\frac{\psi}{|\nabla\rho|},
\end{align*}
which deduces (\ref{ineq6-1}).

Furthermore, noting that $\rho^2\psi\geq |x|^2$ thanks to $\alpha\leq 1$, then we can apply Lemma \ref{lem3} for the first term on the right-hand side of (\ref{ineq6-1}) to deduce (\ref{ineq6-2}).
\end{proof}

\begin{lemma} \label{lem5}
Assume $m>2$, $Q>6$  and $\alpha\leq1$,  and let $u\in M^{3,2}(\Omega)$. Then
\begin{align}\label{hardy-4}
\int_{B_r} \frac{|\nabla_{X}u|^2}{\rho^4}\leq C\left(r^{-4}\int_{B_r}|\nabla_{X} u|^2+\int_{B_r}|\nabla_{X}(\Delta_{X}u)|^2+r^{-5}\int_{\partial B_r}\frac{u^2\psi}{|\nabla\rho|}+r^{-1}\int_{\partial B_r}\frac{(\Delta _{X}u)^2\psi}{|\nabla\rho|}\right),
\end{align}
and
\begin{align} \label{hhardy2}
&\int_{B_r}\frac{ u^2}{\rho^6}\leq\int_{B_r}\frac{ u^2}{\rho^4|x|^2}\nonumber\\[2mm]
&\leq C\left( \int_{B_r}|\nabla_X(\Delta_{X}u)|^2+r^{-4}\int_{B_r}|\nabla_{X} u|^2+r^{-1}\int_{\partial B_r} \frac{(\Delta _{X}u)^2\psi}{|\nabla\rho|}+r^{-5}\int_{\partial B_r}\frac{u^2\psi}{|\nabla\rho|}\right).
 \end{align}
\end{lemma}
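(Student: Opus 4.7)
The strategy is a weighted double integration by parts on $\int_{B_r}|\nabla_X u|^2/\rho^4$. The favourable sign in $\Delta_X\rho^{-4} = -4(Q-6)\rho^{-6}\psi$, already computed in the proof of Lemma~\ref{lem4}, produces a positive buffer $2(Q-6)\int_{B_r}u^2\psi/\rho^6$ on the left-hand side, which I will use to absorb the Cauchy--Schwarz cross term. Lemma~\ref{lem3} applied to $\Delta_X u$ (available because $\alpha\le 1$ implies $\rho^2\psi \ge |x|^2$) then converts the surviving $\int(\Delta_X u)^2/(\rho^2\psi)$ into the target $\int|\nabla_X(\Delta_X u)|^2$. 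Once \eqref{hardy-4} is in hand, \eqref{hhardy2} follows by an integration by parts against the Hardy vector field $h = (x,0)/|x|^2$ already used in Lemma~\ref{lem3}.

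Concretely, integrating $\rho^{-4}\nabla_X u\cdot\nabla_X u$ by parts once and rewriting $\operatorname{div}_X(\rho^{-4}\nabla_X u) = \rho^{-4}\Delta_X u - 4\rho^{-6}\psi Zu$ via \eqref{Z2}, then integrating the $u\psi Zu/\rho^6$ term against the generator $Z$ (using $Z\psi=0$ and $Z\rho=\rho$) produces
\[
\int_{B_r}\frac{|\nabla_X u|^2}{\rho^4} + 2(Q-6)\int_{B_r}\frac{u^2\psi}{\rho^6} = -\int_{B_r}\frac{u\Delta_X u}{\rho^4} + 2r^{-5}\int_{\partial B_r}\frac{u^2\psi}{|\nabla\rho|} + \mathcal{B},
\]
where $\mathcal{B}:=r^{-4}\int_{\partial B_r}u\nabla_X u\cdot\nabla_X\rho/|\nabla\rho|$. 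The weighted Cauchy--Schwarz $|u\Delta_X u|/\rho^4 \le (Q-6)\,u^2\psi/\rho^6 + [4(Q-6)]^{-1}(\Delta_X u)^2/(\rho^2\psi)$ consumes exactly half of the $(Q-6)$ buffer, and Lemma~\ref{lem3} applied to $\Delta_X u$ dispatches the remaining $\int(\Delta_X u)^2/(\rho^2\psi)$ into $\int|\nabla_X(\Delta_X u)|^2$ plus $r^{-1}\int_{\partial B_r}(\Delta_X u)^2\psi/|\nabla\rho|$.

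The real obstacle is $\mathcal{B}$: a direct Cauchy--Schwarz on $\partial B_r$ leaves $\int_{\partial B_r}|\nabla_X u|^2/|\nabla\rho|$, which is not among the admissible boundary quantities of \eqref{hardy-4}. I will bypass this by converting $\mathcal{B}$ back to a bulk integral via the divergence theorem (legitimate since $\rho\equiv r$ on $\partial B_r$):
\[
\mathcal{B} = r^{-4}\int_{B_r}\operatorname{div}_X(u\nabla_X u) = r^{-4}\int_{B_r}\bigl(u\Delta_X u + |\nabla_X u|^2\bigr).
\]
The $r^{-4}\int|\nabla_X u|^2$ piece sits directly on the right of \eqref{hardy-4}. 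For the remaining $r^{-4}\int u\Delta_X u$, the elementary split $2|u\Delta_X u|\le u^2 + (\Delta_X u)^2$ together with $r^{-2}\le |x|^{-2}$ on $B_r$ and two applications of Lemma~\ref{lem3} (one to $u$, one to $\Delta_X u$) controls everything by $r^{-4}\int|\nabla_X u|^2$, $\int|\nabla_X(\Delta_X u)|^2$, and the two admissible boundary terms, up to a constant depending on an upper bound for $r$. This closes \eqref{hardy-4}.

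For \eqref{hhardy2}, the inequality $\int_{B_r}u^2/\rho^6\le \int_{B_r}u^2/(\rho^4|x|^2)$ is immediate from $\rho\ge|x|$. For the second inequality I mimic the proof of Lemma~\ref{lem3}: with $h=(x,0)/|x|^2$ one has $\operatorname{div}_X h = (m-2)/|x|^2$ and $h\cdot\nabla_X\rho=\rho^{-1}\psi$, so integrating $u^2\rho^{-4}\operatorname{div}_X h/(m-2)$ by parts and using AM--GM to re-absorb the $\int u^2/(\rho^4|x|^2)$ that reappears through $|uh\cdot\nabla_X u|/\rho^4\le |u||\nabla_X u|/(|x|\rho^4)$ yields
\[
\int_{B_r}\frac{u^2}{\rho^4|x|^2}\le \frac{4}{(m-2)^2}\int_{B_r}\frac{|\nabla_X u|^2}{\rho^4} + \frac{8}{m-2}\int_{B_r}\frac{u^2\psi}{\rho^6} + \frac{2}{m-2}r^{-5}\int_{\partial B_r}\frac{u^2\psi}{|\nabla\rho|}.
\]
Substituting \eqref{hardy-4} for the first bulk integral and \eqref{ineq6-2} for the second then delivers \eqref{hhardy2}.
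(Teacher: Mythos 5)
Your route to \eqref{hardy-4} differs from the paper's in a pleasant way: by rewriting $\nabla_X\rho^{-4}\cdot\nabla_X u=-4\rho^{-6}\psi\,Zu$ and integrating the resulting $u\psi Zu/\rho^6$ term against the dilation generator $Z$ (legitimately, since $Z\psi=0$ and $Z\rho=\rho$), you produce an \emph{exact} identity in which the favourable term $2(Q-6)\int_{B_r}u^2\psi/\rho^6$ already sits on the left with a sign. The paper instead integrates by parts once, estimates the cross term $\rho^{-5}u\,\nabla_Xu\cdot\nabla_X\rho$ by Cauchy--Schwarz with an $\varepsilon$, and then must separately invoke both Lemma \ref{lem3} and Lemma \ref{lem4} to dispose of $\int u^2\psi/\rho^6$ and $\int(\Delta_Xu)^2/(\rho^2\psi)$. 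Your version dispenses with Lemma \ref{lem4} altogether for \eqref{hardy-4} and makes the absorption exact rather than $\varepsilon$-dependent. The conversion of the boundary term $\mathcal{B}$ back into a bulk integral via the divergence theorem is identical to the paper's move, and your derivation of \eqref{hhardy2} from the Hardy vector field $h$ plus \eqref{hardy-4} and \eqref{ineq6-2} coincides with the paper's.

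There is, however, a genuine gap in how you control the remaining piece $r^{-4}\int_{B_r}u\Delta_Xu$ of $\mathcal{B}$. The elementary split $2|u\Delta_Xu|\le u^2+(\Delta_Xu)^2$ followed by $r^{-2}\le|x|^{-2}$ and Lemma \ref{lem3} gives, for the $\Delta_Xu$ half,
\begin{equation*}
r^{-4}\int_{B_r}(\Delta_Xu)^2\le r^{-2}\int_{B_r}\frac{(\Delta_Xu)^2}{|x|^2}
\le C\,r^{-2}\int_{B_r}|\nabla_X(\Delta_Xu)|^2+C\,r^{-3}\int_{\partial B_r}\frac{(\Delta_Xu)^2\psi}{|\nabla\rho|},
\end{equation*}
which exceeds the admissible right-hand side of \eqref{hardy-4} by a factor $r^{-2}$, unbounded as $r\to0$. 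The caveat ``up to a constant depending on an upper bound for $r$'' does not rescue this, since the offending factor blows up as $r$ shrinks, which is precisely the regime relevant for the doubling estimate; and in any case the constant $C$ in \eqref{hardy-4} must be $r$-independent for the frequency-function argument to close. The fix is short and in fact puts the rest of your $(Q-6)$ budget to use: bound $r^{-4}\int_{B_r}|u||\Delta_Xu|\le\int_{B_r}|u||\Delta_Xu|/\rho^4$ using $r^{-4}\le\rho^{-4}$ on $B_r$ (this is exactly what the paper does at the analogous spot), and then apply the same weighted Cauchy--Schwarz as for the interior term, consuming the second $(Q-6)\int_{B_r}u^2\psi/\rho^6$ half of your buffer. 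With that substitution the proof of \eqref{hardy-4} is correct and arguably cleaner than the paper's; without it, the argument as written does not close.
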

\begin{proof}
Using integration by parts and the Cauchy inequality, we have
\begin{align*}
\int_{B_r}\frac{|\nabla_{X} u|^2}{\rho^4}&=-\int_{B_r}\mbox{div}_{X}\left(\frac{\nabla_{X} u}{\rho^4}\right)u+r^{-4}\int_{\partial B_r} u \frac{\nabla_{X}u\cdot\nabla_{X}\rho}{|\nabla\rho|}\nonumber\\[2mm]
&=-\int_{B_r}\frac{u \Delta_{X} u}{\rho^4}+5\int_{B_r}\rho^{-5}u\nabla_{X} u\cdot\nabla_{X}\rho+r^{-4}\int_{B_r}\mbox{div}_{X}\left(u\nabla_{X} u\right)\nonumber\\[2mm]
&=-\int_{B_r}\frac{u \Delta_{X} u}{\rho^4}+5\int_{B_r}\rho^{-5}u\nabla_{X} u\cdot\nabla_{X}\rho+r^{-4}\int_{B_r}|\nabla_{X} u|^2
+r^{-4}\int_{B_r} u\Delta_{X} u \nonumber\\[2mm]
&\leq 2 \int_{B_r}\frac{|u| |\Delta_{X} u|}{\rho^4} +\varepsilon \int_{B_r}\frac{|\nabla_{X} u|^2}{\rho^4}+C(\varepsilon)\int_{B_r}\frac{u^2|\nabla_{X}\rho|^2}{\rho^6}+ r^{-4}\int_{B_r}|\nabla_{X} u|^2\\[2mm]
&\leq \int_{B_r}\frac{u^2\psi}{\rho^6} +\int_{B_r}\frac{|\Delta_{X}u|^2}{\rho^2\psi}+\varepsilon \int_{B_r}\frac{|\nabla_{X} u|^2}{\rho^4}+C(\varepsilon)\int_{B_r}\frac{u^2\psi}{\rho^6}+ r^{-4}\int_{B_r}|\nabla_{X} u|^2,
\end{align*}
which yields
\begin{align*}
\int_{B_r}\frac{|\nabla_{X} u|^2}{\rho^4}
\leq C\left(r^{-4}\int_{B_r}|\nabla_{X} u|^2 +\int_{B_r}\frac{|\Delta_{X}u|^2}{\rho^2\psi}+\int_{B_r}\frac{u^2\psi}{\rho^6}\right).
\end{align*}
Applying Lemma \ref{lem3} and Lemma \ref{lem4} implies the desired result (\ref{hardy-4}).

Using the same function $h$ in Lemma \ref{lem3} and using (\ref{h-1}) again, integrating by parts,  one has
\begin{align*}
\int_{B_r}\frac{ u^2}{\rho^4|x|^2}&=\frac{1}{m-2}\int_{B_r}\mbox{div}_{X}h \frac{u^2}{\rho^4} \\[2mm]
&=-\frac{1}{m-2}\int_{B_r}\nabla_{X}(\rho^{-4}u^2)\cdot h +\frac{1}{m-2}\int_{\partial B_r}\frac{u^2}{\rho^4} h\cdot\frac{\nabla_{X}\rho}{|\nabla\rho|}\\[2mm]
&=-\frac{2}{m-2}\int_{B_r}\frac{u\nabla_{X}u\cdot h}{\rho^4}+ \frac{4}{m-2}\int_{B_r}\frac{u^2\nabla_{X}\rho\cdot h}{\rho^5}+\frac{1}{m-2}r^{-5}\int_{\partial B_r}\frac{u^2\psi}{|\nabla\rho|} \\[2mm]
&\leq\frac{1}{2} \int_{B_r}\frac{ u^2}{\rho^4|x|^2}+\frac{2}{(m-2)^2}\int_{B_r}\frac{|\nabla_{X}u|^2}{\rho^4}+ \frac{4}{m-2}\int_{B_r}\frac{u^2\psi}{\rho^6}+\frac{1}{m-2}r^{-5}\int_{\partial B_r}\frac{u^2\psi}{|\nabla\rho|}.
\end{align*}
Therefore,
\begin{align*}
\int_{B_r}\frac{ u^2}{\rho^4|x|^2}
\leq \frac{4}{(m-2)^2}\int_{B_r}\frac{|\nabla_{X}u|^2}{\rho^4}+ \frac{8}{m-2}\int_{B_r}\frac{u^2\psi}{\rho^6}+\frac{2}{m-2}r^{-5}\int_{\partial B_r}\frac{u^2\psi}{|\nabla\rho|},
\end{align*}
which together with Lemma \ref{lem4} and Lemma \ref{lem5} yields the estimate (\ref{hhardy2}).
\end{proof}

\begin{remark}
(i)\ The integrands on the the boundary integrals of these inequalities have the weight function $\psi$.
This is an important point and will be shown in the next section when we prove the monotonicity of the frequency function.
 \\
 (ii)\  All the local Hardy-Rellich type inequalities in this Section also hold on the  Heisenberg-type groups.
\end{remark}

\section{The frequency function and doubling estimate }
In this section, we first introduce an Almgren's type frequency function and study the behavior of it.  After obtaining  a monotonicity of the frequency function,  we establish a doubling estimate  for a combination of  $u$  and  $\Delta_{X}u$.

First,
 we decompose the fourth order equation (\ref{L}) into a system of two second order equations, that is,
\begin{equation}\label{w}
\left\{\begin{array}{lll}
\Delta_{X} u=w,\\[2mm]
\Delta_{X} w=V u.
\end{array}\right.
\end{equation}
Let $u\in M_{loc}^{2,2}(\Omega)$ be a weak solution of (\ref{L}).
Define
\begin{equation*}
H_1(r)=\int_{\partial B_r}u^2\frac{\psi}{|\nabla\rho|},\quad \quad H_2(r)=\int_{\partial B_r}w^2\frac{\psi}{|\nabla\rho|},
\end{equation*}
and
\begin{equation}\label{h}
H(r)=H_1(r)+r^{4}H_2(r).
\end{equation}
\begin{lemma}\label{lemma-H}
Let $u$ be a nonzero solution of (\ref{L}), where  $V$ satisfies the assumption  (\ref{V}), with $c_0$ is small in the sense of
\begin{align}\label{small-0}
\frac{4c_0}{(m-2)^2(Q-6)}<1.
\end{align}
Then there exists $r_0$, such that
\begin{equation*}
H(r)\neq 0\quad \mbox{for every}\quad r\in(0,r_0).
\end{equation*}
\end{lemma}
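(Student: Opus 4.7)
The plan is to argue by contradiction: assume the conclusion fails, so that for every $r_0 > 0$ we can find some $r_1 \in (0, r_0)$ with $H(r_1) = 0$. Since the surface weight $\psi/|\nabla \rho|$ is nonnegative and positive almost everywhere on $\partial B_{r_1}$, the vanishing of $H(r_1)$ forces both traces $u|_{\partial B_{r_1}}$ and $w|_{\partial B_{r_1}}$ to vanish almost everywhere. I will use these vanishing boundary values together with the Hardy--Rellich estimates from Section~3 to conclude that $u$ must be identically zero on the whole ball $B_{r_1}$, contradicting the hypothesis that $u$ is a nonzero solution near the origin.

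The starting point is the second equation of the system, $\Delta_X w = Vu$. Multiplying by $w$ and integrating by parts over $B_{r_1}$ gives
\[
\int_{B_{r_1}} |\nabla_X w|^2 \;=\; -\int_{B_{r_1}} Vuw + \int_{\partial B_{r_1}} w\, \frac{\nabla_X w \cdot \nabla_X \rho}{|\nabla \rho|},
\]
and the boundary integral vanishes because $w = 0$ on $\partial B_{r_1}$. Using $|V|\le c_0/\rho^4$ and writing the integrand as $|uw|/\rho^4 = (|u|\sqrt{\psi}/\rho^3)\cdot (|w|/(\rho\sqrt{\psi}))$, the Cauchy--Schwarz inequality then yields
\[
\int_{B_{r_1}} |\nabla_X w|^2 \;\le\; c_0 \left(\int_{B_{r_1}} \frac{u^2 \psi}{\rho^6}\right)^{\!1/2} \left(\int_{B_{r_1}} \frac{w^2}{\rho^2 \psi}\right)^{\!1/2}.
\]

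The two factors on the right are exactly the quantities controlled by Lemmas~\ref{lem3} and~\ref{lem4}. Because both $u$ and $w$ have vanishing trace on $\partial B_{r_1}$, the boundary contributions in (\ref{ineq6-2}), applied to $u$, and in (\ref{hhardy-2}), applied to $w$, drop out completely, leaving
\[
\int_{B_{r_1}}\frac{u^2 \psi}{\rho^6} \le \frac{4}{(m-2)^2(Q-6)^2}\int_{B_{r_1}} |\nabla_X w|^2, \qquad \int_{B_{r_1}}\frac{w^2}{\rho^2 \psi} \le \frac{4}{(m-2)^2}\int_{B_{r_1}} |\nabla_X w|^2.
\]
Chaining these bounds with the Cauchy--Schwarz estimate above produces the self-absorbing inequality
\[
\int_{B_{r_1}} |\nabla_X w|^2 \;\le\; \frac{4c_0}{(m-2)^2(Q-6)} \int_{B_{r_1}} |\nabla_X w|^2,
\]
so the smallness hypothesis (\ref{small-0}) forces $\int_{B_{r_1}} |\nabla_X w|^2 = 0$. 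Feeding this back into Lemma~\ref{lem3} applied to $w$ gives $w \equiv 0$ in $B_{r_1}$; then $\Delta_X u \equiv 0$ with $u|_{\partial B_{r_1}} = 0$, and the identity $0 = \int_{B_{r_1}} u \Delta_X u = -\int_{B_{r_1}} |\nabla_X u|^2$ combined with a further application of Lemma~\ref{lem3} yields $u \equiv 0$ in $B_{r_1}$, the desired contradiction.

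The main obstacle I anticipate is technical rather than conceptual: the integrations by parts and the use of the vanishing traces of $u$ and $w$ on $\partial B_{r_1}$ must be justified for a weak solution in $M^{2,2}_{\mathrm{loc}}(\Omega)$. This will likely require selecting $r_1$ generically so that the traces of $u$, $w$, and of their Baouendi--Grushin gradients on $\partial B_{r_1}$ exist in the appropriate weighted $L^2$ sense (a Fubini/coarea argument on almost every gauge sphere should suffice) and handling the degeneracy of $\psi$ on $\{x=0\}$ carefully in the divergence-theorem computations, so that no extra contribution from the singular stratum appears. It is also worth making explicit that the lemma implicitly reads ``nonzero solution'' as ``$u$ does not vanish identically on any gauge ball centered at the origin'', which is precisely the setting required for the frequency function introduced in the sequel to be well defined.
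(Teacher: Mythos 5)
Your argument is correct and follows essentially the same route as the paper: derive the self-absorbing inequality $\int_{B_{r_1}}|\nabla_X w|^2 \le \tfrac{4c_0}{(m-2)^2(Q-6)}\int_{B_{r_1}}|\nabla_X w|^2$ from the boundary-free Hardy--Rellich estimates and then invoke the smallness hypothesis (your direct Cauchy--Schwarz step is just the optimized form of the paper's Young-inequality-with-$\varepsilon$ step). The only cosmetic difference is the finish: you use Lemma~\ref{lem3} and an integration by parts to conclude $w\equiv 0$ and then $u\equiv 0$, whereas the paper cites a Sobolev--Poincar\'e inequality and the maximum principle for $\Delta_X$; both are valid.
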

\begin{remark}
We observe that, if $u\in M_{loc}^{2,2}(\Omega)$ is a weak solution of (\ref{L}), then
we can check that $u\in M_{loc}^{3,2}(\Omega)$ provided $c_0$ is small in the sense (\ref{small-0}) by using the difference quotient method  and the Hardy-Rellich inequality.
\end{remark}
\begin{proof}
We prove it by contradiction. Suppose that $H(r_{*})=0$ for some $r_{*}\in(0,r_0)$. Then the definition of $H(r)$ implies that $u|_{\partial B_{r_{*}}}=0$ and $w|_{\partial B_{r_*}}=0$.

Multiplying the second equation of (\ref{w}) by $w$ and then integrating over $B_{r_*}$, integrating by parts
with $w|_{\partial B_{r_*}}=0$, we get
\begin{align*}
\int_{B_{r_*}}Vuw=\int_{B_{r_*}}\Delta_X w w=-\int_{B_{r_*}}|\nabla_X w|^2.
\end{align*}
Then, by using the  Hardy-Rellich inequalities (\ref{hhardy-2}) and (\ref{ineq6-2}) with $u|_{\partial {B_{r_*}}}=w|_{\partial {B_{r_*}}}=0$, we have
\begin{align*}
\int_{B_{r_*}}|\nabla_X w|^2&\leq \int_{B_{r_*}}|V||u||w|\leq c_0\int_{B_{r_*}}\frac{|u||w|}{\rho^4}\\[2mm]
&\leq\varepsilon \int_{B_{r_*}}\frac{u^2\psi}{\rho^6}+ \frac{1}{4}c_0^2\varepsilon^{-1}\int_{B_{r_*}}\frac{w^2}{\rho^2\psi}\\[2mm]
&\leq  \frac{4}{(m-2)^2(Q-6)^2}\varepsilon r_*^{4}\int_{B_{r_*}}|\nabla_{X}w|^2
+\frac{4}{(m-2)^2}\frac{1}{4}c_0^2\varepsilon ^{-1}r_*^{4}\int_{B_{r_*}}|\nabla_{X}w|^2\\[2mm]
&=\left(\frac{4}{(m-2)^2(Q-6)^2}\varepsilon+\frac{4}{(m-2)^2}\frac{1}{4}c_0^2\varepsilon ^{-1}\right)r_*^{4}\int_{B_{r_*}}|\nabla_{X}w|^2.
\end{align*}
Let
\begin{align*}
g(\varepsilon)\equiv\frac{4}{(m-2)^2(Q-6)^2}\varepsilon+ \frac{4}{(m-2)^2}\frac{1}{4}c_0^2\varepsilon ^{-1},
\end{align*}
it is easy to see that
\begin{align*}
g_{min}=\frac{4c_0}{(m-2)^2(Q-6)},
\end{align*}
which deduces
\begin{align*}
\int_{B_{r_*}}|\nabla_X w|^2\leq \frac{4c_0}{(m-2)^2(Q-6)} \int_{B_{r_*}}|\nabla_X w|^2.
\end{align*}
This is a contradiction thanks  to the assumption (\ref{small-0}) under the case $\int_{B_{r_*}}|\nabla_X w|^2\neq0$.
If $\int_{B_{r_*}}|\nabla_X w|^2=0$, then applying  Sobolev-Poincar\'{e} inequalities related to the vector fields (\ref{vector1}) (see, e.g.,\cite{FG1994}), one has $w=0$ in $B_{r_*}$. That is
\begin{align*}
\left\{
\begin{array}{lll}
\Delta_{X}u=0,  \quad \mbox{in}\ B_{r_*},\\[2mm]
u\mid_{\partial{B_{r_*}}}=0.
\end{array}
\right.
\end{align*}
Applying the maximum principle for Baouendi-Grushin operator \cite{max}, there holds $u=0$ in $B_{r_*}$. This  contradicts  the hypothesis of a non-zero solution.
The proof of Lemma \ref{lemma-H} is concluded.
\end{proof}

Define
\begin{equation}\label{I3}
I_1(r)=\int_{ B_r} |\nabla_X u|^2+\int_{B_r}uw,
\quad \quad   I_2(r)=\int_{ B_r} |\nabla_X w|^2+\int_{B_r} V wu,
\end{equation}
and
\begin{equation}\label{I}
I(r)=I_1(r)+ r^4 I_2(r).
\end{equation}
Moreover, by using the divergence theorem, equations (\ref{w}) and the fact (\ref{Z2}),  $I_1(r)$ and $I_2(r)$ can be rewritten as
\begin{align}\label{I1}
&I_1(r)=\int_{\partial B_r}u \frac{\nabla_{X}u\cdot\nabla_{X}\rho}{|\nabla\rho|}=\frac{1}{r}\int_{\partial B_r}\frac{u Zu \psi}{|\nabla\rho|},\nonumber\\[2mm]
&I_2(r)=\int_{\partial B_r}w \frac{\nabla_{X}w\cdot\nabla_{X}\rho}{|\nabla\rho|}=\frac{1}{r}\int_{\partial B_r}\frac{w Zw \psi}{|\nabla\rho|}.
\end{align}

\begin{lemma}\label{lemma1}
Let $u$ be a solution of (\ref{L}). Then
\begin{align}\label{H'(r)}
H'(r)=\frac{Q-1}{r}H(r)+2I(r)+4r^3H_{2}(r).
\end{align}
\end{lemma}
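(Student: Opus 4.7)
My plan is to establish separately, for each $i\in\{1,2\}$, the one-function identity
$$H_i'(r) = \frac{Q-1}{r}H_i(r) + 2I_i(r),$$
and then combine via the product rule applied to $H(r)=H_1(r)+r^4H_2(r)$. The extra term $4r^3 H_2(r)$ comes from differentiating the factor $r^4$, while the other pieces assemble cleanly into $\frac{Q-1}{r}H(r)+2I(r)$.

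The key step is to relate $H_1(r)$ to a bulk integral. I will apply the Euclidean divergence theorem on $B_r$ to the vector field $u^2\psi Z$, where $Z$ is the generator (\ref{Z0}) of the anisotropic dilations. Two facts drive the computation: $\operatorname{div}Z = m+(\alpha+1)n = Q$, and $Z\psi=0$ (since $\psi=|x|^{2\alpha}/\rho^{2\alpha}$ is $\delta_\lambda$-homogeneous of degree $0$, so Euler's relation applies). Combined with $Z\rho=\rho$ from (\ref{Z}), this yields $\operatorname{div}(u^2\psi Z) = 2u(Zu)\psi + Qu^2\psi$. Since on $\partial B_r$ the outer unit normal is $\nabla\rho/|\nabla\rho|$ and $Z\cdot\nabla\rho=Z\rho=\rho=r$, the boundary flux equals $rH_1(r)$, so
$$rH_1(r) = 2\int_{B_r} u(Zu)\psi + Q\int_{B_r} u^2\psi.$$

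Now I differentiate both sides in $r$. By the co-area formula $\frac{d}{dr}\int_{B_r} g\,dxdy=\int_{\partial B_r}\frac{g}{|\nabla\rho|}dS$, together with (\ref{I1}), the right-hand side derivative is $2rI_1(r)+QH_1(r)$. The left-hand side is $H_1(r)+rH_1'(r)$. Equating and dividing by $r$ gives $H_1'(r)=\frac{Q-1}{r}H_1(r)+2I_1(r)$. The identical argument with $w$ in place of $u$ (using (\ref{I1}) for $I_2$) gives $H_2'(r)=\frac{Q-1}{r}H_2(r)+2I_2(r)$. Combining,
$$H'(r) = H_1'(r) + 4r^3 H_2(r) + r^4 H_2'(r) = \frac{Q-1}{r}H(r) + 2I(r) + 4r^3 H_2(r),$$
which is (\ref{H'(r)}).

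No serious obstacle is anticipated. The two delicate points are (i) the identity $Z\psi=0$, which is an immediate Euler computation from $\psi=|x|^{2\alpha}\rho^{-2\alpha}$, and (ii) justifying the divergence theorem and co-area formula in the presence of the weight $\psi$ vanishing on $\{x=0\}$ and of the denominator $|\nabla\rho|$ that blows up at the origin. Both are routine because $\rho,\psi\in C^\infty(\mathbb{R}^N\setminus\{0\})$ and the boundary integrals $\int_{\partial B_r}\psi/|\nabla\rho|\,dS$ remain finite for $r>0$, so the manipulations can be carried out away from the origin and limits taken without difficulty.
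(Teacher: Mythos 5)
Your proof is correct. The two computations you rely on --- $\operatorname{div}Z=Q$ and $Z\psi=0$ (Euler's relation for the degree-$0$ homogeneous function $\psi=|x|^{2\alpha}/\rho^{2\alpha}$) --- are both right, the boundary flux of $u^2\psi Z$ on $\partial B_r$ is indeed $rH_1(r)$ since $Z\cdot\nabla\rho=Z\rho=r$ there, and the co-area differentiation correctly produces $2rI_1(r)+QH_1(r)$ using (\ref{I1}). Equating with $\frac{d}{dr}(rH_1(r))=H_1(r)+rH_1'(r)$ and dividing by $r$ gives the one-function identity, and the product rule on $H_1+r^4H_2$ finishes.

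This is a genuinely different route from the paper's. The paper applies the subelliptic (weighted) divergence theorem to the horizontal field $u^2\nabla_X\rho$, noting $H_1(r)=\int_{\partial B_r}u^2\frac{\nabla_X\rho\cdot\nabla_X\rho}{|\nabla\rho|}$, so that $H_1(r)=\int_{B_r}\operatorname{div}_X(u^2\nabla_X\rho)=\int_{B_r}u^2\Delta_X\rho+2\int_{B_r}u\,\nabla_Xu\cdot\nabla_X\rho$, and then uses the co-area formula together with the identity $\Delta_X\rho=\frac{Q-1}{\rho}\psi$ from (\ref{psi2}). Your version instead applies the \emph{Euclidean} divergence theorem to $u^2\psi\,Z$, and replaces the reliance on $\Delta_X\rho=(Q-1)\psi/\rho$ by the elementary facts $\operatorname{div}Z=Q$ and $Z\psi=0$. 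The two are closely related: the Euclidean representative of $u^2\nabla_X\rho$ equals $\rho^{-1}u^2\psi Z$, so your field differs from the paper's by the factor $\rho$; this is why you end up differentiating $rH_1(r)$ rather than $H_1(r)$. What your version buys is that it avoids the sub-Riemannian divergence theorem and the identity (\ref{psi2}), trading them for the purely Euclidean divergence theorem plus a homogeneity observation; the paper's version is slightly more direct in the differentiation step since it works with $H_1$ itself. Both are sound.
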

\begin{proof}
By using the divergence theorem and fact (\ref{psi}), one has
\begin{align*}
H_1(r)&=\int_{\partial B_r}u^2 \frac{\nabla_{X}\rho\cdot\nabla_{X}\rho}{|\nabla\rho|}
=\int_{B_r}\mbox{div}_{X}\left(u^2\nabla_{X}\rho\right)\\[2mm]
&=\int_{B_r} u^2\Delta_{X}\rho+2\int_{B_r} u\nabla_{X}u\cdot\nabla_{X}\rho.
\end{align*}
Using the co-area formula, we get
\begin{equation}\label{h}
H_1(r)=\int_{0}^{r}ds\int_{\partial
B_{s}}u^2\frac{\Delta_{X}\rho}{\left|\nabla\rho\right|}+2\int_{0}^{r}ds\int_{\partial
B_s} u \frac{ \nabla_{X}u\cdot\nabla_{X}\rho}{\left|\nabla\rho\right|}
\end{equation}
Differentiating (\ref{h}) with respect to $r$ and according to
(\ref{psi2}), (\ref{I1}), we get
\begin{align}\label{H1}
H_1'(r)&=\int_{\partial
B_r}u^2\frac{\Delta_{X}\rho}{|\nabla\rho|}+2\int_{\partial
B_r}u\frac{\nabla_{X}u\cdot\nabla_{X}\rho}{|\nabla
\rho|}\nonumber\\[2mm]
&=\frac{Q-1}{r}H_1(r)+2I_1(r).
\end{align}
In a similar way, one has
\begin{align}\label{H2}
H_2'(r)=\frac{Q-1}{r}H_2(r)+2 I_2(r).
\end{align}
Combining (\ref{H1}) and (\ref{H2}) implies the Lemma.
\end{proof}

Due to Lemma \ref{lemma-H}, we can define the frequency function as
\begin{equation}\label{n}
N(r)=\frac{rI(r)}{H(r)}, \quad \mbox{for every}\quad r\in(0,r_0).
\end{equation}
Lemma \ref{lemma-H} also implies that the function $r\rightarrow N(r)$ is absolutely continuous on $(0,r_0)$. Let
\begin{align}\label{ome}
\Omega_{r_0}=\left\{r\in(0,r_0):N(r)>\max(1, N(r_0))\right\}.
\end{align}
Then $\Omega_{r_0}$ is an open subset of $\mathbb{R}$. Therefore,
\begin{align}\label{omega}
\Omega_{r_0}=\bigcup_{j=1}^{\infty}(a_j,b_j),\quad a_j,b_j\notin \Omega_{r_0},
\end{align}
and
\begin{align}\label{h<i}
\frac{H(r)}{r}<I(r),\quad \mbox{for all} \quad r\in \Omega_{r_0}.
\end{align}

The main purpose of this section is to prove the doubling estimate of solutions to (\ref{L}) (Theorem \ref{thm1}). To achieve this goal, we should build the monotonicity of the frequency function. Precisely,
\begin{theorem}\label{thm4}
Let $u$ be a solution of (\ref{L}), where  $V$ satisfies the assumption  (\ref{V}), with $c_0$ is small in the sense of (\ref{small-1}) or (\ref{small-2}).
There exists a constant $\beta>0$, depending only on $m, Q$ and $c_0$,  such that
for every $r\in(0,r_0)$ for which (\ref{h<i}) holds,  we have
\begin{align}\label{key6}
\frac{N'(r)}{N(r)}\geq -\beta\frac{1}{r}.
\end{align}
\end{theorem}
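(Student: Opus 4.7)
The proof will proceed along the classical Almgren--Garofalo lines. The first step is to differentiate the logarithm of $N$: using $N(r)=rI(r)/H(r)$ together with Lemma~\ref{lemma1},
\[
\frac{N'(r)}{N(r)}=\frac{1}{r}+\frac{I'(r)}{I(r)}-\frac{H'(r)}{H(r)}
=\frac{2-Q}{r}+\frac{I'(r)}{I(r)}-\frac{2I(r)}{H(r)}-\frac{4r^{3}H_{2}(r)}{H(r)}.
\]
The trivial estimate $H(r)\ge r^{4}H_{2}(r)$ shows that the last term on the right is bounded by $4/r$, so it contributes $-4/r$ to the final bound.

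Next I would compute $I'(r)$ by combining the co-area formula on the bulk integrals in (\ref{I3}) with a Rellich--Pohozaev identity for the Baouendi--Grushin Laplacian of the form
\[
2\int_{B_{r}}Zv\,\Delta_{X}v=(2-Q)\int_{B_{r}}|\nabla_{X}v|^{2}+\frac{2}{r}\int_{\partial B_{r}}\frac{(Zv)^{2}\psi}{|\nabla\rho|}-r\int_{\partial B_{r}}\frac{|\nabla_{X}v|^{2}}{|\nabla\rho|},
\]
applied to $v=u$ and $v=w$, together with the system (\ref{w}) to convert $\Delta_{X}u$ into $w$ and $\Delta_{X}w$ into $Vu$. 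After the symmetric bulk cancellations, the result takes the schematic form
\[
I'(r)=\frac{Q-2}{r}I(r)+\frac{2}{r^{2}}\int_{\partial B_{r}}\frac{(Zu)^{2}\psi}{|\nabla\rho|}+2r^{2}\int_{\partial B_{r}}\frac{(Zw)^{2}\psi}{|\nabla\rho|}+4r^{3}I_{2}(r)+\mathcal{E}_{V}(r),
\]
where $\mathcal{E}_{V}(r)$ collects an interior $V$-term (essentially a multiple of $r^{4}\int_{B_{r}}Vuw$) together with the singular boundary term $r^{4}\int_{\partial B_{r}}Vuw/|\nabla\rho|$. Applying Cauchy--Schwarz on the spheres to the representations (\ref{I1}) yields $I_{1}(r)^{2}/H_{1}(r)\le r^{-2}\int_{\partial B_{r}}(Zu)^{2}\psi/|\nabla\rho|$ and the analogous bound for $I_{2}$; combined with the convexity inequality $(a+b)^{2}/(c+d)\le a^{2}/c+b^{2}/d$ this gives
\[
\frac{2I(r)^{2}}{H(r)}\le \frac{2}{r^{2}}\int_{\partial B_{r}}\frac{(Zu)^{2}\psi}{|\nabla\rho|}+2r^{2}\int_{\partial B_{r}}\frac{(Zw)^{2}\psi}{|\nabla\rho|}.
\]
Dividing the formula for $I'(r)$ by $I(r)$ and subtracting then cancels the $-2I/H$ contribution in Step~1 exactly while producing a positive $+(Q-2)/r$.

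The main obstacle is to show $\mathcal{E}_{V}(r)/I(r)\ge -C/r$. The bulk part of $\mathcal{E}_{V}$ is controlled pointwise by $|V|\le c_{0}/\rho^{4}$ and then by the refined Hardy--Rellich estimates (\ref{ineq6-2}) and (\ref{hhardy2}) applied to both $u$ and $w$; these produce multiples of $\int_{B_{r}}(|\nabla_{X}u|^{2}+|\nabla_{X}(\Delta_{X}u)|^{2})$, comparable to $I(r)/r$, plus $\psi/|\nabla\rho|$-weighted sphere traces contributing $H(r)/r^{2}$. The genuinely delicate term is the singular boundary piece $r^{4}\int_{\partial B_{r}}Vuw/|\nabla\rho|$, which cannot be dominated pointwise by the weighted traces of $u$ and $w$; as announced in the introduction, it is first converted to an interior integral via the divergence theorem and the system (\ref{w}), and then the higher-order Hardy--Rellich inequality of Lemma~\ref{lem5} is used to absorb it. The smallness assumptions (\ref{small-1}) and (\ref{small-2}) on $c_{0}$ are precisely what guarantee that the absorption coefficient stays strictly below $1$, so the surviving contributions are of the form $C_{1}I(r)/r+C_{2}H(r)/r^{2}$. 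On the set $\Omega_{r_{0}}$ the constraint (\ref{h<i}), namely $H(r)/r<I(r)$, converts the second summand into a further multiple of $I(r)/r$, so that $\mathcal{E}_{V}(r)/I(r)\ge -C/r$. Assembling all four pieces gives $N'(r)/N(r)\ge -\beta/r$ with $\beta=\beta(m,Q,c_{0})$, as required.
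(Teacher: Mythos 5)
Your proposal follows essentially the same route as the paper: logarithmic differentiation of $N$, the Rellich--Pohozaev identity to compute $I'(r)$, Cauchy--Schwarz on the sphere plus the convexity inequality $(a+b)^2/(c+d)\le a^2/c+b^2/d$ to cancel $-2I/H$, control of the bulk $V$-terms by the Hardy--Rellich inequalities together with the key absorption estimate $\int_{B_r}|\nabla_Xu|^2+r^4\int_{B_r}|\nabla_Xw|^2\le CI(r)$ (which is where (\ref{small-1})/(\ref{small-2}) enter), and conversion of the boundary $V$-term by the divergence theorem applied to the dilation field $Z$ followed by Lemma~\ref{lem5}, with (\ref{h<i}) used to trade $H/r^2$ for $I/r$. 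Two small inaccuracies worth fixing: the coefficient in your stated Rellich identity should be $(Q-2)$, not $(2-Q)$ (your subsequent schematic formula for $I'(r)$ already has the correct sign, so this is just a typo), and the boundary term $r^4\int_{\partial B_r}Vuw/|\nabla\rho|$ is converted to an interior integral via the divergence theorem for the vector field $Z$ alone, not the system (\ref{w}).
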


We postpone  the proof of Theorem \ref{thm4} for the moment and are first devoted to set up  a doubling estimate for solutions $u$  to (\ref{L}). This proof is standard (see, e.g., \cite{GL}),  we include it for the sake of completeness.

{\bf Proof of Theorem \ref{thm1}}.\
Integrating  (\ref{key6}) on $(r,b_j)$ where $r\in(a_j,b_j)$, we get
\begin{equation*}
\ln N(b_j)-\ln N(r)\geq -\beta(\ln b_j-\ln r),
\end{equation*}
which yields
\begin{equation*}
N(r)\leq N(b_j)b_j^{\beta}r^{-\beta}.
\end{equation*}
Recalling that $N(b_{j})\leq \max(1,N(r_0))$, we infer that
\begin{equation*}\label{9}
N(r)\leq C\max(1,N(r_0))r^{-\beta}, \quad \mbox{for}\quad r\in\Omega_{r_0}.
\end{equation*}
By the  definitions of $H(r)$ and $I(r)$, one deduces from  (\ref{H'(r)}) that
\begin{equation}\label{ineq}
\left(\ln\frac{H(r)}{r^{Q+3}}\right)'\leq \frac{2N(r)}{r}.
\end{equation}
Integrating (\ref{ineq}) from $r$ to $2r$, with $r\leq \frac{r_0}{2}$, we deduce
\begin{align}\label{10}
\ln\frac{H(2r)}{(2r)^{Q+3}}-\ln\frac{H(r)}{r^{Q+3}}\leq 2\int_{r}^{2r}\frac{N(t)}{t}dt.
\end{align}
Let $J_{r}=\{t\in(r,2r): t\notin\Omega_{r_0}, N(t)\geq0\}$, where $\Omega_{r_0}$ is as in (\ref{ome}). Then
\begin{align}\label{est}
\int_{r}^{2r}\frac{N(t)}{t}dt&\leq \int_{(r,2r)\cap\Omega_{r_0}}\frac{N(t)}{t}dt+\int_{J_r}\frac{N(t)}{t}dt\nonumber\\[2mm]
&\leq C\max(1,N(r_0))\int_{r}^{2r}t^{-\beta+1}dt+\max(1,N(r_0))\int_{r}^{2r}\frac{1}{t}dt\nonumber\\[2mm]
%&\leq C\max(1,N(r_0))r^{-\beta+2}+\max(1,N(r_0))\ln2\nonumber\\[2mm]
&\leq C\max(1,N(r_0))r^{-\beta+2}.
\end{align}
Plugging (\ref{est}) into (\ref{10}), it follows that
\begin{align}\label{dou2}
H(2r)\leq 2^{Q+3}\exp\left(C\max(1,N(r_0))r^{-\beta+2}\right)H(r).
\end{align}
Integrating (\ref{dou2}) from $\frac{r}{2}$ to $r$, we derive the desired estimate (\ref{doubling}) with $\gamma=\beta-2$ and $A=C\max(1,N(r_0))$.\qed

Now, we turn to prove the monotonicity of $N(r)$. We first estimate $I'(r)$ in  the following lemma, which is the key step to show $N(r)$'s monotonicity. The refined  Hardy-Rellich type inequalities  plays a very important role in this calculation.

\begin{lemma}\label{lemma2}
Let $u$ be a solution of (\ref{L}), where  $V$ satisfies the assumption  (\ref{V}), with $c_0$ is
small in the sense of (\ref{small-1}) or (\ref{small-2}).
Then for every $r\in(0,r_0)$ for which (\ref{h<i}) holds,  the following estimate is valid,
\begin{align}\label{I'(r)}
I'(r)=\frac{Q-2}{r}I(r)+\frac{2}{r^2}\int_{\partial{B_r}}\frac{(Zu)^2\psi}{|\nabla\rho|}
+2r^2\int_{\partial{B_r}}\frac{(Zw)^2\psi}{|\nabla\rho|}+O(r^{-1})I(r).
\end{align}
\end{lemma}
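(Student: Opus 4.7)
The strategy is the classical Rellich--Pohozaev approach adapted to the decoupled system (\ref{w}). By the coarea formula $\tfrac{d}{dr}\int_{B_r} f = \int_{\partial B_r} f/|\nabla\rho|$ one immediately gets
\begin{align*}
I_1'(r) = \int_{\partial B_r}\frac{|\nabla_X u|^2 + uw}{|\nabla\rho|}, \qquad I_2'(r) = \int_{\partial B_r}\frac{|\nabla_X w|^2 + Vuw}{|\nabla\rho|}.
\end{align*}
To convert the surface Dirichlet integrals into the bulk ones that appear in $I(r)$ plus the $(Zu)^2$, $(Zw)^2$ boundary terms in (\ref{I'(r)}), I would test $\Delta_X u = w$ against $Zu$ (and $\Delta_X w = Vu$ against $Zw$) and integrate by parts on $B_r$. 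Using $\mathrm{div}\,Z = Q$, $Z\rho = \rho$, the commutator $[X_i, Z] = X_i$ (which gives $\nabla_X u\cdot\nabla_X(Zu) = \tfrac12 Z|\nabla_X u|^2 + |\nabla_X u|^2$), and the identity $\nabla_X u\cdot\nabla_X\rho = (Zu)\psi/\rho$ from (\ref{Z2}), the Rellich identity
\begin{align*}
r\int_{\partial B_r}\frac{|\nabla_X u|^2}{|\nabla\rho|} = (Q-2)\int_{B_r}|\nabla_X u|^2 + \frac{2}{r}\int_{\partial B_r}\frac{(Zu)^2\psi}{|\nabla\rho|} - 2\int_{B_r} w\,Zu
\end{align*}
follows, together with its analogue for $w$ (with $Vu$ in place of $w$ on the right). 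Substituting into $I'(r) = I_1'(r) + 4r^3 I_2(r) + r^4 I_2'(r)$ and collecting the bulk Dirichlet terms as $\frac{Q-2}{r}I(r)$ (while absorbing $\int_{B_r} uw$ and $r^4\int_{B_r} Vuw$ into an error) isolates the two named boundary terms in (\ref{I'(r)}) and leaves a residue consisting of $\int_{B_r} uw$, $\int_{B_r} wZu$, $r^4\int_{B_r} Vuw$, $r^4\int_{B_r} VuZw$ (with the correct powers of $r$), the surface terms $\int_{\partial B_r} uw/|\nabla\rho|$ and $r^4\int_{\partial B_r} Vuw/|\nabla\rho|$, and $4r^3 I_2(r)$.

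\textbf{Error control.} Each residue must be shown to be $O(r^{-1})I(r)$. The two $V$-free cross terms are handled by Cauchy--Schwarz together with the pointwise bound $|Zu|\le \rho|\nabla_X u|\psi^{-1/2}$ coming from (\ref{Z2}), combined with Lemma \ref{lem3} and an elementary Poincar\'e-type estimate for $\int_{B_r}w^2$; this reduces them to constant multiples of $rI_1(r)$. For the potential pieces I would use $|V|\le c_0\rho^{-4}$ and the pointwise AM--GM split $\tfrac{|u||w|}{\rho^4}\le \tfrac12\tfrac{u^2\psi}{\rho^6} + \tfrac12\tfrac{w^2}{\rho^2\psi}$, applying Lemmas \ref{lem3} and \ref{lem4} to the two halves. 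The bulk remainders involve $\int_{B_r}|\nabla_X w|^2$, which is absorbable into $r^4 I_2(r)$ with a positive remaining coefficient precisely because $c_0$ satisfies the smallness condition (\ref{small-1}) (or alternatively (\ref{small-2})); the leftover boundary pieces are of the form $r^{-1}H_2(r)$ and $r^{-5}H_1(r)$, which combine into $r^{-1}H(r)$ and are converted to $O(r^{-1})I(r)$ by hypothesis (\ref{h<i}).

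\textbf{Main obstacle.} The decisive difficulty is the boundary integral $r^4\int_{\partial B_r}Vuw/|\nabla\rho|$: the pointwise bound $|V|\le c_0\rho^{-4}$ on $\partial B_r$ lacks the factor $\psi$ that is built into the weighted surface norms $H_1, H_2$, so it cannot be controlled directly by $H(r)$. Following the authors' hint in the introduction, I would convert this surface integral to a bulk one by writing $\int_{\partial B_r}f/|\nabla\rho| = \tfrac{d}{dr}\int_{B_r}f$ and reorganizing the full $V$-contribution inside $\tfrac{d}{dr}(r^4 I_2(r))$, trading the boundary residue for the bulk quantity $\int_{B_r}u^2\psi/\rho^6$. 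This is exactly the left-hand side of (\ref{ineq6-2}), and so is absorbed into $\int_{B_r}|\nabla_X w|^2$ together with correctly $\psi$-weighted boundary pieces. It is for this step that the local Hardy--Rellich inequalities of Section 3 --- and in particular the presence of $\psi$ in their boundary terms --- are indispensable to closing the estimate.
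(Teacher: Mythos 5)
Your plan is the same Rellich--Pohozaev strategy the paper uses, and most of the error-control steps you name (the pointwise bound $|Zu|\le\rho\psi^{-1/2}|\nabla_X u|$, the AM--GM split through $u^2\psi/\rho^6$ and $w^2/(\rho^2\psi)$, the Hardy--Rellich lemmas, the use of (\ref{h<i}) to replace $r^{-1}H(r)$ by $I(r)$) match the paper's. There are two spots, however, that are described too loosely to close the argument as written.

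First, the boundary term. You propose to handle $r^4\int_{\partial B_r}Vuw/|\nabla\rho|$ by "writing $\int_{\partial B_r}f/|\nabla\rho| = \tfrac{d}{dr}\int_{B_r}f$ and reorganizing the full $V$-contribution inside $\tfrac{d}{dr}(r^4I_2(r))$." That coarea identity is true but does not, by itself, turn a surface integral into a bulk one --- it only re-expresses it as a radial derivative, and since this boundary term already arises from differentiating $r^4I_2(r)$, attempting to push it back inside the derivative is circular. The actual device, which the paper uses, is the Euclidean divergence theorem combined with $Z\rho=\rho$: on $\partial B_r$ one has $\frac{1}{|\nabla\rho|}=\frac{Z\cdot\nabla\rho}{r|\nabla\rho|}=\frac{Z\cdot\vec n}{r}$, whence $\int_{\partial B_r}\frac{f}{|\nabla\rho|}=\frac{1}{r}\int_{B_r}\mathrm{div}(fZ)$. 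Taking $f=c_0|u||w|$ and expanding $\mathrm{div}(fZ)=Q|u||w|+|u|Z|w|+|w|Z|u|$ produces three genuine bulk integrals, each of which then succumbs to your AM--GM/Hardy--Rellich machinery. You should make this identity explicit.

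Second, the phrase "absorbable into $r^4I_2(r)$" glosses over a necessary preliminary. Since $I_2(r)=\int_{B_r}|\nabla_X w|^2+\int_{B_r}Vwu$ can a priori be dominated by its negative part, you cannot absorb $r^4\int_{B_r}|\nabla_X w|^2$ into $r^4I_2(r)$ without first establishing (as the paper does before touching $I'(r)$) the claim $\int_{B_r}|\nabla_X u|^2 + r^4\int_{B_r}|\nabla_X w|^2 \le C\,I(r)$ for $r$ satisfying (\ref{h<i}). This is exactly where the smallness hypotheses (\ref{small-1})/(\ref{small-2}) enter --- they guarantee that the Dirichlet energies dominate the cross terms $\int uw$ and $r^4\int Vuw$ --- and it is what permits you to absorb all bulk Dirichlet residues into $CI(r)$. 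Once you state and prove this claim up front, the rest of your plan goes through as sketched.
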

\begin{proof}
At first, we  prove the following assertion:
if $c_0$ is small  in the sense of (\ref{small-1}) or (\ref{small-2}), then it holds
\begin{align}\label{claim}
\int_{B_r}|\nabla_{X}u|^2+r^{4}\int_{B_r}|\nabla_{X}w|^2\leq CI(r).
\end{align}
Indeed, by the definition of $I(r)$, one has
\begin{align}\label{claim-0}
\int_{B_r}|\nabla_{X}u|^2+r^{4}\int_{B_r}|\nabla_{X}w|^2=I(r)-\int_{B_r}uw-r^4\int_{B_r}Vuw.
\end{align}
Using the Cauchy's inequality with $\varepsilon$ as in Lemma \ref{lemma-H}, and applying Lemmas \ref{lem3}, \ref{lem4},  we obtain
\begin{align}\label{claim-1}
r^4\int_{B_r}Vuw&\leq c_0 r^4\int_{B_r}\frac{|u||w|}{\rho^4}
\leq \varepsilon  r^4\int_{B_r}\frac{u^2\psi}{\rho^6}+ \frac{1}{4}c_0^2\varepsilon ^{-1}r^4\int_{B_r}\frac{w^2}{\rho^2\psi}\nonumber\\[2mm]
&\leq \frac{4}{(m-2)^2(Q-6)^2}\varepsilon r^{4}\int_{B_r}|\nabla_{X}w|^2
+\frac{4}{(m-2)^2}\frac{1}{4}c_0^2\varepsilon ^{-1}r^{4}\int_{B_r}|\nabla_{X}w|^2\nonumber\\[2mm]
&+Cr^{-1}\int_{\partial B_r}\frac{u^2\psi}{|\nabla\rho|}+Cr^{3}\int_{\partial B_r}\frac{w^2\psi}{|\nabla\rho|}\nonumber\\[2mm]
&\leq \frac{4c_0}{(m-2)^2(Q-6)}r^4\int_{B_r}|\nabla_{X}w|^2+Cr^{-1}\left(\int_{\partial B_r}\frac{u^2\psi}{|\nabla\rho|}+r^{4}\int_{\partial B_r}\frac{w^2\psi}{|\nabla\rho|}\right).
\end{align}
For the term $\int_{B_r}uw$, we can  estimate as  above with $c_0=1$, that is,
\begin{align}\label{claim-2}
\int_{B_r}uw&\leq r^4\int_{B_r}\frac{|u||w|}{\rho^4}\nonumber\\[2mm]
&\leq \frac{4}{(m-2)^2(Q-6)}r^4\int_{B_r}|\nabla_{X}w|^2+Cr^{-1}\left(\int_{\partial B_r}\frac{u^2\psi}{|\nabla\rho|}+r^{4}\int_{\partial B_r}\frac{w^2\psi}{|\nabla\rho|}\right).
\end{align}
Putting (\ref{claim-1}), (\ref{claim-2}) into (\ref{claim-0}),  and using (\ref{h<i}) and  the assumption (\ref{small-2}), it holds
\begin{align*}
&\int_{B_r}|\nabla_{X} u|^2+r^{4}\int_{B_r}|\nabla_{X} w|^2\\[2mm]
&\leq  \frac{4c_0+4}{(m-2)^2(Q-6)}r^4\int_{B_r}|\nabla_{X}w|^2+Cr^{-1}\left(\int_{\partial B_r}\frac{u^2\psi}{|\nabla\rho|}+r^{4}\int_{\partial B_r}\frac{w^2\psi}{|\nabla\rho|}\right)\\[2mm]
&<r^4\int_{B_r}|\nabla_{X}w|^2+CI(r),
\end{align*}
so, the claim (\ref{claim}) is proved  provided  (\ref{small-2}).

On the other hand, applying Lemma \ref{lem3} to $u$ and $w$, the term $\int_{B_r}uw$ can also be controlled as
\begin{align*}
\int_{B_r}uw&\leq r^2\int_{B_r}\frac{|u||w|}{\rho^2}\leq \frac{\epsilon}{2}\int_{B_r}\frac{u^2}{\rho^2}+\frac{1}{2\epsilon}r^4\int_{B_r}\frac{w^2}{\rho^2}\nonumber\\[2mm]
&\leq \frac{2\epsilon}{(m-2)^2}\int_{B_r}|\nabla_{X}u|^2 +\frac{2}{(m-2)^2\epsilon}r^4\int_{B_r}|\nabla_{X}w|^2+Cr^{-1}\left(\int_{\partial B_r}\frac{u^2\psi}{|\nabla\rho|}+r^4\int_{\partial B_r}\frac{w^2\psi}{|\nabla\rho|}\right).
\end{align*}
Choosing $\epsilon$ such that $\frac{2\epsilon}{(m-2)^2}<1$, i.e. $\epsilon=\frac{(m-2)^2}{2+\delta}$  for some $\delta>0$ to be  determined later. Then the above inequality becomes
\begin{align}\label{claim-3}
\int_{B_r}uw
\leq \frac{2}{2+\delta}\int_{B_r}|\nabla_{X}u|^2 +\frac{4+2\delta}{(m-2)^4}r^4\int_{B_r}|\nabla_{X}w|^2+Cr^{-1}H(r).
\end{align}
Substituting (\ref{claim-1}) and (\ref{claim-3}) into (\ref{claim-0}), and using (\ref{h<i}),
\begin{align}\label{claim-4}
&\int_{B_r}|\nabla_{X} u|^2+r^{4}\int_{B_r}|\nabla_{X} w|^2\nonumber\\[2mm]
&\leq \frac{2}{2+\delta}\int_{B_r}|\nabla_{X}u|^2 +\left(\frac{4+2\delta}{(m-2)^4}+\frac{4c_0}{(m-2)^2(Q-6)}\right)r^4\int_{B_r}|\nabla_{X}w|^2+ CI(r).
\end{align}
Since (\ref{small-1}),  let $\delta=\frac{1}{4}(m-2)^4\left(1-\frac{4c_0}{(m-2)^2(Q-6)}-\frac{4}{(m-2)^4}\right)>0$, then
\begin{align*}
\frac{4+2\delta}{(m-2)^4}+\frac{4c_0}{(m-2)^2(Q-6)}<1.
\end{align*}
Therefore, we prove (\ref{claim})  in the case (\ref{small-1}).

Now, we calculate $I'(r)$.  Differentiating (\ref{I}) with respect to $r$, one gets
\begin{align}\label{I-1}
I'(r)&=I'_{1}(r)+r^4I'_{2}(r)+4r^3I_{2}(r)\nonumber\\[2mm]
&=\int_{ \partial{B_r}} \frac{|\nabla_X u|^2}{|\nabla\rho|}+\int_{\partial{B_r}}\frac{uw}{|\nabla\rho|}
+r^{4}\int_{ \partial{B_r}} \frac{|\nabla_X w|^2}{|\nabla\rho|} \nonumber\\[2mm]
&+r^{4}\int_{\partial{B_r}}\frac{V uw}{|\nabla\rho|}
+4r^{3}\int_{ B_r} |\nabla_X w|^2+4r^{3}\int_{B_r} V uw \nonumber\\[2mm]
& \equiv K_1+K_2+K_3+r^{4}\int_{\partial{B_r}}\frac{V uw}{|\nabla\rho|} +4r^{3}\int_{ B_r} |\nabla_X w|^2+4r^{3}\int_{B_r} V uw.
\end{align}
By using the fact (\ref{Z}) and the   divergence theorem , one has
\begin{align}
K_1&\equiv\int_{ \partial{B_r}} \frac{|\nabla_X u|^2}{|\nabla\rho|}=\frac{1}{r}\int_{ \partial{B_r}} \frac{|\nabla_X u|^2}{|\nabla\rho|}Z\rho\nonumber\\[2mm]
&=\frac{1}{r}\int_{ \partial{B_r}} |\nabla_X u|^2 Z\cdot\frac{\nabla\rho}{|\nabla\rho|}=
\frac{1}{r}\int_{B_r}\mbox{div}(|\nabla_{X}u|^2Z) \nonumber\\[2mm]
&=\frac{Q}{r}\int_{B_r}|\nabla_{X}u|^2+\frac{2}{r}\int_{B_r}\sum_{i}X_iu Z(X_iu).\nonumber
\end{align}
Recalling $[X_i,Z]=X_i$,  using the divergence theorem and equation (\ref{w}), the last term on $K_1$ becomes
\begin{align}
\frac{2}{r}\int_{B_r}\sum_{i}X_iu Z(X_iu)&=\frac{2}{r}\int_{B_r}\sum_{i}X_iu X_iZu-\frac{2}{r}\int_{B_r}\sum_{i}(X_iu)^2\nonumber\\[2mm]
&=-\frac{2}{r}\int_{B_r}w Zu+\frac{2}{r}\int_{\partial{B_r}}Zu\frac{\nabla_{X}u\cdot\nabla_{X}\rho}{|\nabla\rho|} -\frac{2}{r}\int_{B_r}|\nabla_{X}u|^2.\nonumber
\end{align}
Hence, by using the identity (\ref{Z2}), we obtain
\begin{align}
K_1&=\frac{Q-2}{r}\int_{B_r}|\nabla_{X}u|^2-\frac{2}{r}\int_{B_r}w Zu+\frac{2}{r}\int_{\partial{B_r}} Zu \frac{\nabla_{X}u\cdot\nabla_{X}\rho}{|\nabla\rho|}\nonumber\\[2mm]
&=\frac{Q-2}{r}\int_{B_r}|\nabla_{X}u|^2-\frac{2}{r}\int_{B_r}w Zu
+\frac{2}{r^2}\int_{\partial{B_r}} (Zu)^2 \frac{\psi}{|\nabla\rho|}.
\end{align}
The term $K_3$  can be calculated in the same way as $K_1$,
\begin{align}
K_3&\equiv r^{4}\int_{ \partial{B_r}} \frac{ |\nabla_X w|^2}{|\nabla\rho|}\nonumber\\[2mm]
&=\frac{Q-2}{r}r^{4}\int_{B_r}|\nabla_Xw|^2-2r^{3}\int_{B_r}V u Zw
+2r^{2} \int_{\partial{B_r}}(Zw)^2\frac{\psi}{|\nabla\rho|}.
\end{align}
By using the fact (\ref{Z}) and the   divergence theorem again, one has
\begin{align}
K_2&\equiv \int_{\partial{B_r}}\frac{uw}{|\nabla\rho|}=\frac{1}{r}\int_{\partial{B_r}}uw\frac{Z\rho}{|\nabla\rho|}
=\frac{1}{r}\int_{B_r}\mbox{div}(uw  Z)\nonumber\\[2mm]
&=\frac{Q}{r}\int_{B_r}uw+\frac{1}{r}\int_{B_r}w Zu+ \frac{1}{r}\int_{B_r}u Zw.
\end{align}
Putting $K_i$ (i=1,2,3) into (\ref{I-1}) and recalling (\ref{I1}), it holds
\begin{align}\label{sum1}
I'(r)&=\frac{Q-2}{r} I(r)+\frac{2}{r^2}\int_{\partial{B_r}}\frac{(Zu)^2\psi}{|\nabla\rho|}
+2r^2\int_{\partial{B_r}}\frac{(Zw)^2\psi}{|\nabla\rho|}+4r^3\int_{B_r}|\nabla_{X}w|^2\nonumber\\[2mm]
&+\frac{2}{r}\int_{B_r}uw-(Q-6)r^3\int_{B_r}Vuw
-\frac{1}{r}\int_{B_r} Zu w\nonumber\\[2mm]
&+\frac{1}{r}\int_{B_r}Zw u-2r^{3}\int_{B_r}VuZw+r^{4}\int_{\partial B_r}\frac{Vuw}{|\nabla\rho|}
\nonumber\\[2mm]
&\equiv\frac{Q-2}{r} I(r)+\frac{2}{r^2}\int_{\partial{B_r}}\frac{(Zu)^2\psi}{|\nabla\rho|}
+2r^2\int_{\partial{B_r}}\frac{(Zw)^2\psi}{|\nabla\rho|}+4r^3\int_{B_r}|\nabla_{X}w|^2\nonumber\\[2mm]
&+\sum_{i=1}^6R_{i}.
\end{align}
In the following, we estimate $R_i$.
Applying Lemma   \ref {lem3} to $w$ and Lemma \ref{lem4} to $u$, term $R_1$ can be estimated as follows:
\begin{align}\label{mon-1}
R_1&\equiv2r^{-1}\int_{B_r}uw\leq2r^{3}\int_{B_r}\frac{|u||w|}{\rho^4}\nonumber\\[2mm]
&\leq r^{3} \int_{B_r} \frac{u^2\psi}{\rho^6}+ r^{3} \int_{B_r} \frac{w^2}{\rho^2\psi}\nonumber\\[2mm]
&\leq C\left(r^{3} \int_{B_r}|\nabla_{X}w|^2 +r^{2}\int_{\partial{B_r}}\frac{w^2\psi}{|\nabla\rho|}
+ r^{-2}\int_{\partial{B_r}}\frac{u^2\psi}{|\nabla\rho|}\right).
\end{align}
Applying the assumption on $V$ (\ref{V}), using Lemma \ref{lem3} and Lemma  \ref{lem4} again, one has
\begin{align}
R_2&\equiv -(Q-6)r^{3}\int_{B_r}Vuw\leq C  r^{3}\int_{B_r} \frac{|u||w|}{\rho^4}\nonumber\\[2mm]
&\leq Cr^{3} \left(\int_{B_r} \frac{u^2\psi}{\rho^6}+  \int_{B_r} \frac{w^2}{\rho^2\psi}\right)\nonumber\\[2mm]
&\leq C r^{3} \left(\int_{B_r}|\nabla_{X}w|^2 +r^{-1}\int_{\partial{B_r}}\frac{w^2\psi}{|\nabla\rho|}
+ r^{-5}\int_{\partial{B_r}}\frac{u^2\psi}{|\nabla\rho|}\right).
\end{align}
In order to estimate the terms with $Zu$ or $Zw$, we show the following fact:
\begin{equation}\label{fact}
|Zu|\leq \frac{\rho^{\alpha+1}}{|x|^{\alpha}}|\nabla_{X}u|\leq \rho \psi^{-1/2}|\nabla_{X}u|.
\end{equation}
Indeed,
\begin{equation*}
Zu=(x,(\alpha+1)y)\cdot(\partial_{x}u,\partial_{y}u)=\left(x,\frac{(\alpha+1)y}{|x|^{\alpha}}\right)\cdot (\partial_{x}u,|x|^{\alpha}\partial_{y}u).
\end{equation*}
According to (\ref{fact}), and applying Lemma \ref{lem3},  it holds
\begin{align}\label{R3}
R_3&\equiv-r^{-1}\int_{B_r} Zu w
\leq r^{-1}\int_{B_r} \rho \psi^{-1/2}|\nabla _{X}u||w|\leq r\int_{B_r} \frac{|\nabla _{X}u||w|}{\rho \psi^{1/2}}\nonumber\\[2mm]
&\leq C \left(r^{-1}\int_{B_r}  |\nabla _{X}u|^2+  r^3\int_{B_r}\frac{w^2}{\rho^2\psi}\right)\nonumber\\[2mm]
&\leq C \left(r^{-1}\int_{B_r}  |\nabla _{X}u|^2+  r^3\int_{B_r} |\nabla_{X}w|^2+ r^{2} \int_{\partial{B_r}}\frac{w^2\psi}{|\nabla\rho|}\right),
\end{align}
and
\begin{align}\label{R4}
R_4&\equiv r^{-1}\int_{B_r}Zw u \leq r\int_{B_r}  \frac{|u| |\nabla _{X}w|}{\rho\psi^{1/2}}\nonumber\\[2mm]
&\leq C\left(  r^3\int_{B_r} |\nabla _{X}w|^2+r^{-1} \int_{B_r} \frac{|u|^2}{\rho^2\psi}\right)\nonumber\\[2mm]
&\leq C \left(r^3\int_{B_r} |\nabla _{X}w|^2+r^{-1} \int_{B_r} |\nabla_{X}u|^2+ r^{-2} \int_{\partial{B_r}}\frac{u^2\psi}{|\nabla\rho|}\right).
\end{align}
Using (\ref{fact}) again, and applying Lemma \ref{lem5}, one has
\begin{align}\label{R5}
R_5&\equiv-2r^{3}\int_{B_r}VuZw  \leq Cr^{3}\int_{B_r}\frac{|u||Zw|}{\rho^4}
\leq Cr^{3}\int_{B_r}\frac{|u||\nabla_{X}w|}{\rho^3\psi^{1/2}}\nonumber\\[2mm]
&\leq  Cr^{3}\left(\int_{B_r}\frac{u^2}{\rho^6\psi} + \int_{B_r}|\nabla_{X}w|^2\right) \nonumber\\[2mm]
&\leq Cr^{3}\left(\int_{B_r}|\nabla_Xw|^2+r^{-4}\int_{B_r}|\nabla_{X} u|^2+r^{-1}\int_{\partial B_r} \frac{w^2\psi}{|\nabla\rho|}+r^{-5}\int_{\partial B_r}\frac{u^2\psi}{|\nabla\rho|}\right).
\end{align}
Under assumption (\ref{V}), we estimate the boundary term $R_6$ as follows,
\begin{align}\label{R6}
R_6&\equiv r^{4}\int_{\partial{B_r}}\frac{V wu}{|\nabla\rho|}\leq c_0\int_{\partial{B_r}}\frac{|w||u|}{|\nabla\rho|}\nonumber\\[2mm]
&=c_0 r^{-1}\int_{\partial{B_r}}\frac{|w||u|Z\cdot\nabla\rho}{|\nabla\rho|}
=c_0r^{-1}\int_{{B_r}}\mbox{div}\left(Z|w||u|\right)\nonumber\\[2mm]
&= c_0r^{-1}\left(\int_{{B_r}}\mbox{div}Z|w||u| +\int_{{B_r}}|u|Z|w|+\int_{{B_r}}|w|Z|u| \right)\nonumber\\[2mm]
&\equiv R_{61}+ R_{62}+ R_{63}.
\end{align}
Recalling the fact (\ref{Z0}), we have
\begin{align}
 R_{61}&=c_0Qr^{-1}\int_{{B_r}}|w||u|
\leq C r^{3} \int_{{B_r}}\frac{|w||u|}{\rho^4} \nonumber\\[2mm]
 &\leq  C r^{3}\left(\int_{B_r} \frac{u^2\psi}{\rho^6}+   \int_{B_r} \frac{w^2}{\rho^2\psi}\right)\nonumber\\[2mm]
&\leq   Cr^{3}\left(\int_{B_r}|\nabla_{X}w|^2 + r^{-1}\int_{\partial{B_r}}\frac{w^2\psi}{|\nabla\rho|}
+ r^{-5}\int_{\partial{B_r}}\frac{u^2\psi}{|\nabla\rho|}\right).
 \end{align}
By (\ref{fact}) again, similarly as for (\ref{R3}), one has
\begin{align}
 R_{62}&\equiv C r^{-1}\int_{{B_r}}|u|Z|w| \leq C r^{-1}\int_{B_r}|Zw||u|\nonumber\\[2mm]
 &\leq C \left( r^{3}\int_{B_r}  |\nabla _{X}w|^2+ r^{-1} \int_{B_r} |\nabla_{X}u|^2+ r^{-2} \int_{\partial{B_r}}\frac{u^2\psi}{|\nabla\rho|}\right),
 \end{align}
 and
 \begin{align}\label{mon-2}
 R_{63}&\equiv C r^{-1}\int_{{B_r}}|w|Z|u|\leq C r^{-1}\int_{B_r}|Zu||w|\nonumber\\[2mm]
 &\leq C \left( r^{-1}\int_{B_r}  |\nabla _{X}u|^2+ r^{3} \int_{B_r} |\nabla_{X}w|^2+ r^{2} \int_{\partial{B_r}}\frac{w^2\psi}{|\nabla\rho|}\right).
 \end{align}
Putting (\ref{mon-1})-(\ref{mon-1}) into (\ref{sum1}), and   according to the claim (\ref{claim}) and (\ref{h<i}), we arrive at
\begin{align*}
\left|I'(r)-\left( \frac{Q-2}{r}I(r)+ \frac{2}{r^2}\int_{\partial{B_r}}\frac{(Zu)^2\psi}{|\nabla\rho|}
+2r^2\int_{\partial{B_r}}\frac{(Zw)^2\psi}{|\nabla\rho|}\right)\right|\leq C\frac{I(r)}{r}.
\end{align*}
 This finishes the proof of Lemma \ref{lemma2}.
\end{proof}

Based on  Lemmas \ref{lemma1}, \ref{lemma2}, the monotonicity of the frequency function can be be easily established.

{\bf Proof of Theorem \ref{thm4}}.\
Using (\ref{H'(r)}) and (\ref{I'(r)}), we finally obtain
\begin{align*}
\frac{N'(r)}{N(r)}&=\frac{1}{r}+\frac{I'(r)}{I(r)}-\frac{H'(r)}{H(r)}\\
&\geq \frac{2\left(\int_{\partial B_r}u_n^2+r^4\int_{\partial B_r}w_n^2\right)}{\int_{\partial B_r}u u_n+r^4\int_{\partial B_r}w w_n}
-\frac{2\left(\int_{\partial B_r}u u_n+r^4\int_{\partial B_r}w w_n\right)}{\int_{\partial B_r}u^2+r^{4}\int_{\partial B_r}w^2}-\frac{C}{r}-\frac{4}{r}\\
&\geq-\frac{\beta}{r},
\end{align*}
where we have used the Schwarz's inequality. This finishes the proof.\qed

\section{Vanishing order and strong unique continuation property}

To prove Theorem \ref{thm0}, we need to check that if the vanishing order of  a solution $u$ to $Lu=0$ is infinite, then  $\Delta_{X}u$ also vanishes to infinite order. Precisely,

\begin{theorem}\label{thm3}
Let $u\in M^{2,2}_{loc}(\Omega)$ be a solution of (\ref{L}), where  $V$  satisfies the growth assumption (\ref{V}). If $u$ vanishes to infinite order  at the origin in the sense of (\ref{exp}), then $\Delta_{X}u$ also vanishes to infinite order at the origin, that is
\begin{align}\label{exp-2}
\int_{B_r}(\Delta_{X}u)^2\psi=O\left(\exp(-\widetilde{B}r^{-\gamma})\right),
\end{align}
as $r\rightarrow0$ for some constants $\widetilde{B},\gamma>0$.
\end{theorem}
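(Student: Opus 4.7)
The plan is to reduce Theorem \ref{thm3} to a Caccioppoli-type estimate of the form
\begin{align*}
\int_{B_r}w^2\psi \leq \frac{C}{r^k}\int_{B_{2r}}u^2\psi,
\end{align*}
valid for all sufficiently small $r$, where $w = \Delta_{X}u$ and $C, k$ depend only on $m, Q, \alpha$ and $c_0$. Once such an estimate is in hand, the hypothesis $\int_{B_{2r}}u^2\psi = O(\exp(-B(2r)^{-\gamma}))$ (equivalently (\ref{exp2}) via Proposition \ref{prop}) immediately yields $\int_{B_r}w^2\psi = O(\exp(-\widetilde{B}r^{-\gamma}))$ for any $\widetilde{B} < 2^{-\gamma}B$, which is (\ref{exp-2}).

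To derive the Caccioppoli estimate I would first pick a radial cutoff $\eta = \eta(\rho) \in C_c^{\infty}(B_{3r/2})$ with $\eta \equiv 1$ on $B_r$, satisfying $|\nabla_{X}\eta|^2 \leq C\psi/r^2$ and $|\Delta_{X}\eta| \leq C\psi/r^2$. Since $u \in M^{2,2}_{loc}$ is a weak solution of (\ref{L}), testing against $\phi = \eta^2 u \in M_0^{2,2}$ and using the weak formulation $\int \Delta_{X}u\,\Delta_{X}\phi = \int V u\phi$ together with
\begin{align*}
\Delta_{X}(\eta^2 u) = \eta^2 w + 2u\eta\Delta_{X}\eta + 2u|\nabla_{X}\eta|^2 + 4\eta\nabla_{X}\eta\cdot\nabla_{X}u
\end{align*}
yields the key identity
\begin{align*}
\int \eta^2 w^2 = \int \eta^2 V u^2 - 2\int wu\eta\Delta_{X}\eta - 2\int wu|\nabla_{X}\eta|^2 - 4\int w\eta\nabla_{X}\eta\cdot\nabla_{X}u.
\end{align*}
The three ``cutoff error'' terms live in the annulus $B_{3r/2}\setminus B_r$ where $\rho \sim r$, so a Cauchy-Schwarz argument absorbs a controlled fraction of $\int\eta^2 w^2$ into the left-hand side at the price of contributions of order $r^{-4}\int_{B_{2r}\setminus B_r}u^2 + r^{-2}\int_{B_{2r}\setminus B_r}|\nabla_{X}u|^2$. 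The remaining $|\nabla_{X}u|^2$ piece is handled by a standard second-order Caccioppoli applied to $\Delta_{X}u = w$, giving $\int_{B_{3r/2}}|\nabla_{X}u|^2 \leq Cr^{-2}\int_{B_{2r}}u^2 + Cr^2\int_{B_{2r}}w^2$, and one further absorption eliminates the $w^2$ term on the enlarged ball for $r$ small.

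The main obstacle and the heart of the argument is the singular term $\int \eta^2 Vu^2$, which under (\ref{V}) is bounded by $c_0\int\eta^2 u^2/\rho^4$. To control the non-integrable singularity of $1/\rho^4$ at the origin I would use the pointwise interpolation
\begin{align*}
\frac{u^2}{\rho^4} \leq \frac{1}{2}\,\frac{u^2\psi}{\rho^6} + \frac{1}{2}\,\frac{u^2}{\rho^2\psi},
\end{align*}
and then apply the refined Hardy-Rellich inequalities of Lemmas \ref{lem3} and \ref{lem4} to the compactly supported function $\eta u$, so that the boundary terms in those inequalities vanish. This expresses $\int\eta^2 u^2/\rho^4$ as a combination of $\int\eta^2 w^2$ (with coefficient proportional to $c_0$) plus lower-order contributions in $u$ and $\nabla_{X}u$ of the type already handled above. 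The smallness of $c_0$, in the spirit of (\ref{small-1})--(\ref{small-2}) from Theorem \ref{thm1}, then allows the resulting $w^2$ contribution to be absorbed into $\int\eta^2 w^2$ on the left-hand side. The delicate part I expect is the bookkeeping: after accumulating all constants coming from Lemmas \ref{lem3}--\ref{lem5} and the auxiliary Caccioppoli for $|\nabla_{X}u|^2$, the aggregate coefficient multiplying $\int\eta^2 w^2$ on the right-hand side must remain strictly less than $1$ in order for the final absorption to succeed, and it is precisely the explicit smallness hypothesis on $c_0$ that makes this possible, in direct parallel to its role in the monotonicity argument of Section 4.
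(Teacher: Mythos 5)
Your overall scaffolding (cutoff, test function $\eta^p u$, absorb the Cauchy--Schwarz error terms, auxiliary Caccioppoli for $\int\eta^2|\nabla_X\eta|^2|\nabla_X u|^2$) matches the paper's Theorem \ref{thm3} proof up to the inessential choice of $\eta^2$ versus $\eta^4$. The divergence, and the gap, is in how you treat $\int \eta^p V u^2$.

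Your claim that combining the pointwise bound $\rho^{-4}\le\frac12\rho^{-6}\psi+\frac12\rho^{-2}\psi^{-1}$ with Lemmas \ref{lem3}--\ref{lem4} applied to $\eta u$ ``expresses $\int\eta^2u^2/\rho^4$ as a combination of $\int\eta^2 w^2$ (with coefficient proportional to $c_0$) plus lower-order contributions in $u$ and $\nabla_X u$'' is not correct. Inequality \eqref{ineq6-1} controls $\int\frac{(\eta u)^2\psi}{\rho^6}$ by $\int\frac{|\Delta_X(\eta u)|^2}{\rho^2\psi}$, \emph{not} by $\int|\Delta_X(\eta u)|^2$: the weight $\rho^{-2}\psi^{-1}$ is unbounded at the origin and cannot be discarded. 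To remove it you would apply \eqref{hhardy-2} to $\Delta_X(\eta u)$, which produces $\int|\nabla_X(\Delta_X(\eta u))|^2$, i.e.\ a third-order quantity $\int\eta^2|\nabla_X w|^2$ plus cutoff errors. That cannot be absorbed into $\int\eta^2 w^2$, no matter how small $c_0$ is. A Caccioppoli estimate for $w$ to bring $|\nabla_X w|^2$ back down reintroduces $\int\Delta_X w\cdot w\,\eta^2=\int Vuw\,\eta^2\lesssim c_0\int\frac{|u||w|}{\rho^4}\eta^2$, which after Cauchy--Schwarz brings back $\int\frac{\eta^2 u^2\psi}{\rho^6}$ and $\int\frac{\eta^2 w^2}{\rho^2\psi}$ again. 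The iteration does not close; this is precisely the circularity that the frequency-function structure of Section 4 is designed to break, and it is not available in a pure Caccioppoli argument.

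The paper sidesteps this entirely: it does \emph{not} try to control $\int Vu^2\eta^4$ by absorption. Instead, after reaching the estimate
\begin{align*}
\int_{B_r}(\Delta_X u)^2\le 2\int_{B_{2r}}Vu^2\eta^4+Cr^{-4}\int_{B_{2r}}u^2,
\end{align*}
it estimates $\int_{B_{2r}}\frac{u^2}{\rho^4}$ directly by a dyadic decomposition
\begin{align*}
\int_{B_{2r}}\frac{u^2}{\rho^4}=\sum_{j\ge 0}\int_{\{2^{-j}r\le\rho\le 2^{1-j}r\}}\frac{u^2}{\rho^4}\le\sum_{j\ge 0}2^{4j}r^{-4}\int_{B_{2^{1-j}r}}u^2\le C\sum_{j\ge 0}2^{4j}r^{-4}e^{-B2^{(j-1)\gamma}r^{-\gamma}},
\end{align*}
using the exponential-decay hypothesis (\ref{exp}) (equivalently (\ref{exp2}) via Proposition \ref{prop}). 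The double-exponential decay in $j$ crushes the geometric growth $2^{4j}$, the series converges, and the result is itself $O(e^{-\widetilde B r^{-\gamma}})$. No Hardy--Rellich inequality and no smallness of $c_0$ is needed for this step. You should replace your Hardy--Rellich argument for the potential term by this dyadic decomposition; the rest of your plan then goes through.
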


\begin{proof}
Let $\eta(t)\in C_0^{\infty}(\mathbb{R})$ be a cut-off function such that
\begin{equation*}
\eta(t)=\left\{
\begin{array}{lll}1\quad 0\leq t\leq r,\\
0\quad t\geq 2r.
\end{array}\right.
\end{equation*}
Moreover, $0\leq\eta\leq1$, and
\begin{align*}
| \eta'(t)|\leq \frac{C}{r},\quad \mbox{and}\hspace{2mm}|\eta''(t)|\leq \frac{C}{r^2}, \quad \mbox{for} \hspace{2mm} r\leq t\leq 2r.
\end{align*}
Choosing cut-off function $\eta(\rho(z))$,  from the properties of the gauge norm  (\ref{psi}), (\ref{psi2}), it holds
\begin{align}\label{test}
 %|\nabla_{X}\rho|=\psi^{1/2},\quad \mbox{and}\quad  | X_iX_j\rho|\leq \frac{C}{\rho}, \quad z\in B_{2r}\backslash B_{r}
 &\left|X \eta(\rho(z))\right|=\left|\eta' X \rho\right|\leq \frac{C}{r}\psi^{1/2}\nonumber\\[2mm]
 &\left|X_iX_j\eta(\rho(z))\right|=\left|\eta'' X_i\rho X_j\rho+\eta' X_iX_j\rho\right|\leq \frac{C}{r^2},\quad z\in B_{2r}\backslash B_{r}.
\end{align}
Since $u$ is a solution of (\ref{L}), it holds
\begin{align}\label{L1}
\int_{\Omega}\Delta_{X}^2 u\phi=\int_{\Omega}Vu\phi,
\end{align}
for any $\phi\in M^{2,2}_0(\Omega)$.
Taking the test function $\phi=u\eta^4$ in (\ref{L1}) and integrating by parts twice, we deduce that
\begin{align*}
&\int_{B_{2r}}Vu^2\eta^4=\int_{B_{2r}}\Delta_{X}^2u (u\eta^4)
=\int_{B_{2r}}\Delta_{X} u \Delta_{X}(u\eta^4),
\end{align*}
which yields
\begin{align}\label{inest}
\int_{B_{2r}}(\Delta_{X} u)^2\eta^4&=\int_{B_{2r}}Vu^2\eta^4-4\int_{B_{2r}}\eta^3u\Delta_{X} u \Delta_{X}\eta\nonumber\\[2mm]
&-12\int_{B_{2r}}\eta^2u\Delta_{X} u|\nabla_{X} \eta|^2-8\int_{B_{2r}}\eta^3\Delta_{X} u\nabla_{X} u\cdot\nabla_{X} \eta\nonumber\\[2mm]
&\equiv \int_{B_{2r}}Vu^2\eta^4+\sum_{i=1}^{3}J_i.
\end{align}
Using the Cauchy inequality and the properties of the test function (\ref{test}), it holds
\begin{align}\label{inestJ1}
J_1&\leq C\int_{B_{2r}}\eta^3\left|u\Delta_{X} u \Delta_{X}\eta\right| \nonumber\\[2mm]
&\leq \frac{1}{8} \int_{B_{2r}}(\Delta_{X} u)^2\eta^4+C \int_{B_{2r}}u^2 \eta^2(\Delta_{X}\eta)^2,
%&\leq \varepsilon \int_{B_{2r}}(\Delta_{X} u)^2\eta^4+C(\varepsilon)r^{-4} \int_{B_{2r}}u^2  \eta^2 \psi^{2},
\end{align}
\begin{align}\label{inestJ2}
J_2&\leq C\int_{B_{2r}}\eta^2\left|u\Delta_{X} u\right| |\nabla_{X} \eta|^2\nonumber\\[2mm]
&\leq \frac{1}{8} \int_{B_{2r}}(\Delta_{X} u)^2\eta^4+C \int_{B_{2r}}u^2 |\nabla_{X} \eta|^4,
%&\leq \varepsilon \int_{B_{2r}}(\Delta_{X} u)^2\eta^4+C(\varepsilon)r^{-4} \int_{B_{2r}}u^2   \psi^{2}.
\end{align}
and
\begin{align}\label{inestJ3}
J_3&\leq C\int_{B_{2r}}\eta^3\Delta_{X} u\nabla_{X} u\cdot\nabla_{X} \eta \nonumber\\[2mm]
&\leq \frac{1}{8} \int_{B_{2r}}(\Delta_{X} u)^2\eta^4+C \int_{B_{2r}} \eta^2|\nabla_{X}\eta|^2|\nabla_{X}u|^2.
\end{align}
Next, we estimate the last term on the right-hand side  of (\ref{inestJ3}).
Integrating by parts  deduces that
\begin{align*}
&\int_{B_{2r}}\eta^2|\nabla_{X} u|^2|\nabla_{X}\eta|^2=-\int_{B_{2r}}\mbox{div}_{X}\left(\nabla_{X}u|\nabla_{X}\eta|^2\eta^2\right)u\\[2mm]
&=-\int_{B_{2r}}\Delta_{X}u|\nabla_{X}\eta|^2\eta^2u-2\int_{B_{2r}}X_kuX_{k}(X_j\eta)X_{j}\eta \eta^2u-2\int_{B_{2r}}\eta u|\nabla_{X}\eta|^2\nabla_{X}u\cdot\nabla_{X}\eta\\[2mm]
&\leq\frac{1}{16}\int_{B_{2r}}|\Delta_{X}u|^2\eta^4+\frac{1}{2}\int_{B_{2r}}|\nabla_{X}u|^2|\nabla_{X}\eta|^2\eta^2+
C\left(\int_{B_{2r}}u^2|\nabla_{X}\eta|^4+\int_{B_{2r}}|\nabla^2_{X}\eta|^2\eta^2u^2\right),
\end{align*}
which gives
\begin{align}\label{key}
\int_{B_{2r}}\eta^2|\nabla_{X} u|^2|\nabla_{X}\eta|^2
\leq \frac{1}{8}\int_{B_{2r}}|\Delta_{X}u|^2\eta^4+C\left(\int_{B_{2r}}u^2|\nabla_{X}\eta|^4+\int_{B_{2r}}u^2|\nabla^2_{X}\eta|^2\eta^2\right).
\end{align}
Putting (\ref{inestJ1})-(\ref{key}) into (\ref{inest}), we see that
\begin{align}\label{vanish}
\int_{B_{r}}(\Delta_{X} u)^2\leq 2\int_{B_{2r}}Vu^2\eta^4+Cr^{-4}\int_{B_{2r}}u^2.
\end{align}
Since $u$  vanishes to infinite order  at the origin in the sense of (\ref{exp}), the assumption on $V$ (\ref{V}) and Proposition \ref{prop} infer that
\begin{align*}
\int_{B_{2r}}Vu^2\eta^4&\leq c_0\int_{B_{2r}}\frac{u^2}{\rho^4}=c_0\sum_{j=0}^{\infty}\int_{2^{-j}r\leq\rho(z)\leq 2^{-(j-1)}r}\frac{u^2}{\rho^4} \\[2mm]
&\leq c_0\sum_{j=0}^{\infty}2^{4j}r^{-4} \int_{\rho(z)\leq 2^{-(j-1)}r}u^2 \\[2mm]
&\leq C \sum_{j=0}^{\infty}2^{4j}r^{-4} e^{-B2^{(j-1)\gamma}r^{-\gamma}} \\[2mm]
%&\leq Cr^{-4}\sum_{j=0}^{\infty}e^{(ln 16j-B2^{j\gamma})(2r)^{-\gamma}}
&\leq Ce^{-\widetilde{B}(2r)^{-\gamma}}.
\end{align*}
Then, both terms on the right side of (\ref{vanish}) vanish to infinite order  and the theorem
follows.
\end{proof}

With Theorem \ref{thm1}, Theorem \ref{thm3} at hand,  the strong unique continuation property can be obtained directly.  The argument is
standard (see e.g. \cite{gl1}), we include it for the sake of completeness.

{\bf Proof of Theorem \ref{thm0}}.
Making use of Theorem \ref{thm3} and Proposition \ref{prop},  the assumption (\ref{exp}) imply that
\begin{align*}
\int_{B_r}\left(u^2+(\Delta_{X}u)^2\right)\psi\leq C \exp(-Br^{-\gamma}), \quad \mbox{for some constants}\quad B,\gamma>0.
\end{align*}
Now, for fixed $R$,  after $k$ times iterations of (\ref{doubling}), we infer
\begin{align*}
\int_{B_{R}}\left(u^2 + (\Delta_{X}u)^2\right)\psi&\leq
\Big(C\exp\left(A R^{-\gamma}\right)\Big)^{k}\int_{B_{2^{-k}R}}\left(u^2 + (\Delta_{X}u)^2\right)\psi\\[2mm]
&\leq\Big(C\exp\left(A R^{-\gamma}\right)\Big)^{k}\exp\left(-B(2^{-k}R)^{-\gamma}\right)\\[2mm]
&\leq \exp\left((k\ln C+kA-B2^{k\gamma})R^{-\gamma}\right)\rightarrow 0\quad \mbox{as} \hspace{1mm} k\rightarrow \infty.
\end{align*}
Then $u\equiv 0$ in $B_{R}$. Moreover, for some point $(0,y_0)\in B_{R}$, we have $u\equiv0$ in the neighbourhood of $(0,y_0)$, so $u$ vanishes to infinite order at $(0,y_0)$.
Since the operator $\Delta^2_{X}$ is translation invariant in $y$, we can  repeat the previous process in the gauge ball $B_{r}(0,y_0)$ to show that $u\equiv 0$ in $B_{R}(0,y_0)$. On the other hand, outside of $B_{R} (0,y_0)$, the equation (\ref{L}) can be seen as bi-Laplace with  bounded potential $V$, therefore we can apply the results in \cite{liu2022} to conclude that $u\equiv0$ in $\Omega$.
\qed

\section*{Funding}
The research of the first author was supported by the National Natural Science Foundation of China (No.12071219). The research of the second author was supported by the National Natural Science Foundation of China (No.11971229).

\end{document}